\documentclass[12pt,reqno]{amsart}
\usepackage{amsmath,amssymb,latexsym,textcomp,mathrsfs}
\usepackage[all]{xy}
\usepackage{graphicx}
\usepackage{bm,amsmath, amsthm, amssymb, amsfonts}
\usepackage{times}
\usepackage[english]{babel}

\setlength{\textheight}{220mm} \setlength{\textwidth}{155mm}
\setlength{\oddsidemargin}{1.25mm}
\setlength{\evensidemargin}{1.25mm} \setlength{\topmargin}{0mm}

\setbox0=\hbox{$+$}
\newdimen\plusheight
\plusheight=\ht0
\def\+{\;\lower\plusheight\hbox{$+$}\;}

\setbox0=\hbox{$-$}
\newdimen\minusheight
\minusheight=\ht0
\def\-{\;\lower\minusheight\hbox{$-$}\;}

\setbox0=\hbox{$\cdots$}
\newdimen\cdotsheight
\cdotsheight=\plusheight%\ht0
\def\cds{\lower\cdotsheight\hbox{$\cdots$}}

\numberwithin{equation}{section}
\theoremstyle{plain}
\newtheorem{theorem}{Theorem}[section]
\newtheorem{lemma}{Lemma}[section]
\newtheorem{example}{Example}[section]

\newtheorem{definition}{Definition}[section]

\newtheorem{remark}{Remark}[section]
\newtheorem{note}{Note}[section]

  \newenvironment{nouppercase}{%
   \renewcommand{\uppercasenonmath}[1]{}}{}
	
	 \newcommand{\Keywords}[1]{\par\noindent
   {\small{Keywords and phrases}: #1}}
   
   \newcommand{\AMS}[1]{\par\noindent
   {\small{AMS Subject Classification (2010)}: #1}}

\begin{document}

\title{$s\beta_\lambda $-CLOSED SETS AND SOME LOW SEPARATION AXIOMS IN GT-SPACES}

 \author{Jagannath Pal$^1$}
 \author{Amar Kumar Banerjee$^2$}
 \newcommand{\acr}{\newline\indent}
 \maketitle
 \address{{1\,} Department of Mathematics, The University of Burdwan, Golapbag, East Burdwan-713104,
 West Bengal, India.
 Email: jpalbu1950@gmail.com\acr
 {2\,} Department of Mathematics, The University of Burdwan, Golapbag, East Burdwan-713104,
 West Bengal, India. Email: akbanerjee@math.buruniv.ac.in, akbanerjee1971@gmail.com 
 \\} 
   
\begin{abstract}
Here we have studied the ideas of $ sg_\lambda,s\lambda$ and $ s\beta_\lambda $-closed sets and investigated some of their properties in generalized topological spaces. We have also studied some low separation axioms  namely $ s\lambda T_\frac{1}{4} $,  $ s\lambda T_\frac{3}{8} $,  $ s\lambda T_\frac{1}{2} $ axioms and their mutual relations with $ s\lambda T_0 $ and $s\lambda T_1 $ axioms. 
\end{abstract}

\begin{nouppercase}
\maketitle
\end{nouppercase}

\let\thefootnote\relax\footnotetext{
\AMS{Primary 54A05, 54A10, 54D10}
\Keywords {$ s\lambda$-closed sets,  $ sg_\lambda $-closed sets, $ s\beta_\lambda$-closed sets, $ sA_\lambda^\wedge$-sets axioms.}
}

\section{\bf Introduction}
\label{sec:int}
After generalization of a topological space given by A.D. Alexandroff \cite{ALD} in 1940 to $ \sigma $-space (Alexandroff space), many topologists turned their attention to carry out their works in such direction viz. in $ \sigma $-spaces and bispaces etc. \cite{AD, AS, BS, BP, LD} where several works in respect of topological properties have been studied. On the other hand, N.Levine  \cite{NL} 1970 introduced the concept of generalized closed sets (g-closed sets) in a topological space which opened the door of many aspects in topological spaces to generate different types of generalized sets. In 1987, Bhattacharyya and Lahiri \cite{BL} introduced the class of semi-generalizsed closed sets in a topological space. Since then many authors have contributed to the subsequent development of various topological properties on semi generalized closed sets ($ sg $-closed sets)  \cite{DM}, \cite{MBD}, \cite{SM} where many more references are found. By taking an equivalent form of $ g $-closed sets,  M. S. Sarsak  \cite{MS} introduced $ g_\mu $-closed sets in a generalized topological space $ (X,\mu) $  and studied new separation axioms namely $ \mu$-$T_\frac{1}{4},  \mu$-$T_\frac{3}{8} $ and $ \mu$-$T_\frac{1}{2} $ axioms by defining $ \lambda_\mu $-closed sets and investigated their properties and relations among the new axioms with $ \mu$-$T_0 $ and $ \mu$-$T_1 $ axioms. It can be verified that the properties given by Sarsak \cite{MS}   almost remain same if we take $ sg_\mu $-closed sets and $ s\lambda_\mu $-closed sets  in generalized topological spaces.

We have extended the notion of $ g_\mu $-closed sets and $ \lambda_\mu $-closed sets in a more general structure of generalized topological space by introducing the idea of $ s\lambda $-closed sets, $sg_\lambda$-closed sets and $s\beta_\lambda$-closed sets and investigated some of their properties. We also give $ s\lambda  T_\frac{1}{4},  s\lambda T_\frac{3}{8}, s\lambda T_\frac{1}{2} $ axioms  and explore mutual relations among these axioms with $ s\lambda T_0$ and $ s\lambda T_1$. Moreover, we have studied $ s\lambda $-homeomorphism in a generalized topological space.

 \section{\bf Preliminaries}
 \label{sec:pre}
 
 Let $ X $ be a nonempty set. A generalized topology $\mu $ \cite{MS} is a collection of subsets of $ X $ such that $ \emptyset \in  \mu $ and $ \mu $ is closed under arbitrary unions. In a generalized topological space (in short a GTspace) $ (X, \mu) $, members of $ \mu $ are called $ \mu $-open sets and complements are $ \mu $-closed sets. When there is no confusion, the GTspace  $(X , \mu)$ will simply be denoted by $ X $.
 
\begin{definition} (c.f.\cite{BN})\label{1}. Let $ (X,\mu) $ be a GTspace then
 
(1) a point $ x\in X $ is said to be a $ \mu $-adherence point of a subset $ A$ of $ X $, if for every $ \mu $-open set $ U $ containing $ x $ such that $ A\cap U\not=\emptyset $. The set of all $ \mu $-adherence points of $ A $ is called $ \mu $-closure of $ A $ and is denoted by $ \overline{A_\mu} $.
 
(2)  $ \mu $-interior of a subset $ A $ of $ X $ is defined as the union of all $ \mu $-open sets contained in $ A $ and is denoted by $ Int_\mu(A) $.
 \end{definition}
 
\begin{lemma}\label{2}(c.f.\cite{BC}).
Let $ A,B$ be subsets of $ X $. For $ \mu $-closure the following hold:
 
(1) $ \overline{A_\mu} =\bigcap\{F:A\subset F; F $ is $ \mu $-closed\}. 
  
(2) $ \overline{A_\mu}$  is $ \mu $-closed.
 
(3) $ A $ is $ \mu $-closed if and only if $ A=\overline{A_\mu} $.
 
(4) $ A \subset\overline{A_\mu} $ and $\overline{ \overline{(A_\mu)}_\mu}= \overline{A_\mu} $.
 
(5) If $ A\subset B $, then $ \overline{A_\mu} \subset \overline{B_\mu}  $.
 \end{lemma}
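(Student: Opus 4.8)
The plan is to prove part (1) directly from the definition of $\mu$-adherence points and then obtain parts (2)--(5) as formal consequences of (1), using only that the complement of a $\mu$-open set is $\mu$-closed together with De Morgan's laws. First I would record the two observations that underpin everything: since $\emptyset \in \mu$, its complement $X$ is $\mu$-closed, so the family $\mathcal{F}_A = \{F : A \subset F,\ F \text{ is } \mu\text{-closed}\}$ appearing in (1) is always nonempty; and since $\mu$ is closed under arbitrary unions, an arbitrary intersection of $\mu$-closed sets is again $\mu$-closed (complement the intersection and apply De Morgan). These are the only places where the generalized-topology axioms enter.

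For (1), write $C = \bigcap \mathcal{F}_A$ and prove the two inclusions by contraposition. If $x \in \overline{A_\mu}$ and $F \in \mathcal{F}_A$, then assuming $x \notin F$ would make $U = X \setminus F$ a $\mu$-open set containing $x$ and disjoint from $A$ (as $A \subset F$), contradicting that $x$ is a $\mu$-adherence point of $A$; hence $x \in F$ for all $F \in \mathcal{F}_A$, giving $\overline{A_\mu} \subseteq C$. Conversely, if $x \notin \overline{A_\mu}$ then by Definition \ref{1} there is a $\mu$-open $U$ with $x \in U$ and $A \cap U = \emptyset$; then $X \setminus U$ is a $\mu$-closed member of $\mathcal{F}_A$ not containing $x$, so $x \notin C$. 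This yields $C \subseteq \overline{A_\mu}$ and hence equality.

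With (1) in hand the rest is formal. Part (2) is immediate: $\overline{A_\mu}$ is an intersection of $\mu$-closed sets, hence $\mu$-closed by the De Morgan remark. For (4), the inclusion $A \subseteq \overline{A_\mu}$ holds because every $F \in \mathcal{F}_A$ contains $A$, so their intersection does too; the idempotency $\overline{(\overline{A_\mu})_\mu} = \overline{A_\mu}$ then follows by applying (3) to the $\mu$-closed set $\overline{A_\mu}$. Part (3): if $A = \overline{A_\mu}$ then $A$ is $\mu$-closed by (2); conversely, if $A$ is $\mu$-closed then $A \in \mathcal{F}_A$, so $C \subseteq A$, while $A \subseteq \overline{A_\mu} = C$ from (4), forcing $A = \overline{A_\mu}$. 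Finally (5) follows from monotonicity of intersection: $A \subseteq B$ gives $\mathcal{F}_B \subseteq \mathcal{F}_A$, and intersecting over the smaller family $\mathcal{F}_B$ can only enlarge the result, so $\overline{A_\mu} = \bigcap \mathcal{F}_A \subseteq \bigcap \mathcal{F}_B = \overline{B_\mu}$; alternatively one checks directly that every $\mu$-adherence point of $A$ is a $\mu$-adherence point of $B$.

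I expect no serious obstacle, since these are the standard Kuratowski-type closure identities. The one point requiring genuine care is that a GT-space need not contain $X$ as a $\mu$-open set, so the full topological axioms are unavailable and the argument must instead rest on the two structural remarks above (nonemptiness of $\mathcal{F}_A$, and closure of the $\mu$-closed sets under arbitrary intersection). A secondary subtlety worth keeping in mind is that a point lying in no $\mu$-open set is vacuously a $\mu$-adherence point of every set; this is consistent with (1) and does not affect any of the arguments.
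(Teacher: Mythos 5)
Your proof is correct. Note that the paper offers no proof of Lemma \ref{2} at all --- it is stated as a known preliminary result (c.f. \cite{BC}) --- so there is nothing to compare against; your argument is the standard one, and it correctly isolates the only two facts a GT-space still provides where the full topological axioms are missing: $X$ is $\mu$-closed (since $\emptyset\in\mu$), so the family $\mathcal{F}_A$ is nonempty, and arbitrary intersections of $\mu$-closed sets are $\mu$-closed (since $\mu$ is closed under arbitrary unions). The order of deduction of (2)--(5) from (1) is also non-circular: part (3) uses (2) and only the first inclusion of (4), and the idempotency claim in (4) then uses (3), so the argument stands as written.
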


Clearly for a subset $ A\subset X, x\in \overline{A_\mu} $ iff each $ \mu $-open set containing $ x $ intersects $ A $.

\begin{remark}\label{3}(c.f.\cite{BC}).
Let $ A,B $ be subsets of $ X $, then for $ \mu $-interior the following hold:
 
(1)  $ Int_\mu(A) \subset A $. \quad\qquad \qquad  \qquad \qquad (2) If $ A\subset B $, then  $ Int_\mu(A)  \subset  Int_\mu(B) $
 
(3) $  Int_\mu(A) $ is $ \mu $-open. \qquad \qquad \qquad \quad (4) $ A $ is $ \mu $-open if and only if $  Int_\mu(A) =A$.
 \end{remark}

\begin{definition} \label{4}(c.f.\cite{LD}).
A set $ A $ in $ X $ is said to be semi $ \mu $-open ($ s\mu $-open) if there exists a $ \mu $-open set $ E $ in $ X $ such that $ E\subset A \subset \overline{E_\mu} $ i.e. $ A\subset \overline{(Int_\mu(A))_\mu} $. A set $ A $ is  semi $ \mu $-closed ($ s\mu $-closed) if and only if $ X - A $ is semi $ \mu $-open.
\end{definition}

\begin{definition}\label{5} (c.f.\cite {MS}). Let $A $ be a subset of $ (X,\mu) $. We define $ sA_\mu^\wedge= \bigcap \{ U: A\subset U, U $ is $ s\mu $-open\}  and 
$ sA_\mu^\vee =  \bigcup \{F: F \subset A, F $ is $ s\mu $-closed\}. We denote $ sA_\mu^\wedge= X$ if there is no $ s\mu $-open set containing $ A $ and $ sA_\mu^\vee = \emptyset$ if there is no $ s\mu $-closed set contained in $ A $. Again $A$ is called a $ s\wedge_\mu$-set if $A = sA_\mu^\wedge$ and 
$A$ is called a $ s\vee_\mu$-set if $A = sA_\mu^\vee$.  \end{definition}

\begin{definition}\label{6}(c.f.\cite{MS}).
A subset $A$ of $ (X,  \mu) $  is said to be $ s\lambda$-closed if $ A = K\cap P $  where $ K $ is a $ s\wedge_\mu $-set   and $ P $ is a $ s\mu $-closed set. $ A $ is called  $ s\lambda$-open if $ X -A $ is  $ s\lambda $-closed.
\end{definition}

Definitions of $ s\mu $-closure, $ s\lambda $-closure, $ s\mu $-interior, $ s\lambda $-interior of a set $ A $ of $ X $ can be given as  definition \ref{1} and same may be denoted by $ \overline{sA_\mu} $, $ \overline{sA_\lambda} $, $ sInt_\mu (A) $, $ sInt_\lambda (A) $ respectively.

Obviously in a GTspace $ (X,\mu) $, a $ \mu $-open set is  $ s\mu $-open and a $ \mu $-closed set is $ s\mu $-closed, but converses may not be always true.  It can be shown easily that arbitrary union of $ s\mu $-open sets is $ s\mu $-open. Hence collection of $ s\mu $-open sets forms a generalized topology in $ X $.

\begin{lemma}\label{7}(c.f.\cite{BC}).
Let $ A,B $ be subsets of $ X $. For $ s\mu $-closure the following hold:
 
(1) $ \overline{sA_\mu} =\bigcap\{F:A\subset F; F $ is $ s\mu $-closed\}. 
  
(2) $ \overline{sA_\mu}$  is $ s\mu $-closed.
 
(3) $ A $ is $ s\mu $-closed if and only if $ A=\overline{sA_\mu} $.
 
(4) $ A \subset\overline{sA_\mu} $ and $\overline{ s\overline{(sA_\mu)}_\mu}= \overline{sA_\mu} $.
 
(5) If $ A\subset B $, then $ \overline{sA_\mu} \subset \overline{sB_\mu}  $.
 \end{lemma}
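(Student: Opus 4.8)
The plan is to observe that the whole lemma is the verbatim analogue of Lemma~\ref{2} with $\mu$ replaced throughout by the generalized topology of $s\mu$-open sets, and to reduce to it. The excerpt records exactly the two facts that make this reduction legitimate: the collection $\sigma$ of all $s\mu$-open subsets of $X$ is itself a generalized topology on $X$ (arbitrary unions of $s\mu$-open sets are $s\mu$-open, and $\emptyset$ is $s\mu$-open since $\emptyset$ is $\mu$-open with $\emptyset\subset\emptyset\subset\overline{\emptyset_\mu}$), and $s\mu$-closure is defined exactly as the $\mu$-adherence construction of Definition~\ref{1} but relative to $\sigma$. Consequently $\overline{sA_\mu}$ is nothing but the $\sigma$-closure of $A$ in the GTspace $(X,\sigma)$, and statements (1)--(5) are precisely items (1)--(5) of Lemma~\ref{2} read in $(X,\sigma)$. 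So in principle nothing remains to prove; for completeness I would nonetheless carry out the three genuinely load-bearing steps below.

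First I would establish (1), the duality between adherence points and closed supersets. For the inclusion $\overline{sA_\mu}\subset\bigcap\{F:A\subset F,\ F\text{ is }s\mu\text{-closed}\}$, take $x\in\overline{sA_\mu}$ and an $s\mu$-closed $F\supset A$; if $x\notin F$ then $X-F$ is an $s\mu$-open set (Definition~\ref{4}) containing $x$ and disjoint from $A$, contradicting that $x$ is an $s\mu$-adherence point, so $x\in F$. Conversely, if $x$ lies in every such $F$ but some $s\mu$-open $U\ni x$ misses $A$, then $X-U$ is an $s\mu$-closed superset of $A$ not containing $x$, a contradiction; hence $x\in\overline{sA_\mu}$. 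Note the defining family is never empty, because $X$ is always $s\mu$-closed ($\emptyset$ being $s\mu$-open), so the edge-case convention of Definition~\ref{5} does not intrude here.

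With (1) in hand the remaining items are routine. For (2) I would use that arbitrary intersections of $s\mu$-closed sets are $s\mu$-closed --- the De~Morgan dual of the stated fact that arbitrary unions of $s\mu$-open sets are $s\mu$-open --- so that $\overline{sA_\mu}$, being such an intersection by (1), is $s\mu$-closed. Item (4): $A\subset\overline{sA_\mu}$ is immediate from (1) since $A$ is contained in each $F$, and idempotence $\overline{s\overline{(sA_\mu)}_\mu}=\overline{sA_\mu}$ follows by applying (3) to the $s\mu$-closed set $\overline{sA_\mu}$. Item (3): if $A$ is $s\mu$-closed then $A$ itself is a member of the family in (1), giving $\overline{sA_\mu}\subset A$, which with $A\subset\overline{sA_\mu}$ yields equality; the converse is just (2). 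Item (5): $A\subset B$ shrinks the family of $s\mu$-closed supersets, so the intersection enlarges, giving $\overline{sA_\mu}\subset\overline{sB_\mu}$. The only points demanding care --- and thus the ``hard part,'' such as it is --- are the foundational closure properties of the $s\mu$-open and $s\mu$-closed families together with the nonemptiness of the defining family that neutralises the conventions in Definition~\ref{5}; everything else is formal.
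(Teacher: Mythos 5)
Your proposal is correct. The paper gives no proof of this lemma at all --- it states it with a citation to \cite{BC}, immediately after observing that the collection of $s\mu$-open sets forms a generalized topology on $X$; your reduction of the lemma to Lemma~\ref{2} via exactly that observation, with the adherence-point/closed-superset duality of item (1) and the edge cases ($\emptyset$ is $s\mu$-open, hence $X$ is $s\mu$-closed and the defining family is nonempty) verified by hand, is precisely the argument the paper leaves implicit, carried out in full.
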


Clearly for a subset $ A\subset X, x\in \overline{sA_\mu} $ iff  any $ s\mu $-open set containing $ x $ intersects $ A $.

\begin{remark}\label{8}(c.f.\cite{BC})
Let $ A,B $ be subsets of $ X $, then for $ s\mu $-interior the following hold:
 
(1)  $ sInt_\mu(A) \subset A $. \quad\qquad \qquad  \qquad \qquad (2) If $ A\subset B $, then  $ sInt_\mu(A)  \subset  sInt_\mu(B) $
 
(3) $  sInt_\mu(A) $ is $ s\mu $-open. \qquad \qquad \qquad \quad (4) $ A $ is $ s\mu $-open if and only if $  sInt_\mu(A) =A$.
 \end{remark}

\begin{lemma}\label{9} (c.f.\cite {MS}). Suppose $A , B$ are
 subsets of $X$.  Then the  following hold:

(1)  $s\emptyset_\mu^\wedge = \emptyset $, \qquad      $s\emptyset_\mu^\vee = \emptyset$, \qquad$sX_\mu^\wedge = X$,\qquad$sX_\mu^\vee = X$

(2) $A \subset sA_\mu^\wedge$,   \qquad $A_\mu^\vee \subset A$

(3)  $s(sA_\mu^\wedge)_\mu^\wedge = sA_\mu^\wedge$,\qquad  $s(sA_\mu^\vee)_\mu^\vee = sA_\mu^\vee$.

(4)  $A \subset B \Rightarrow sA_\mu^\wedge \subset s B_\mu^\wedge$, \qquad 
$A \subset B \Rightarrow sA_\mu^\vee \subset sB_\mu^\vee$.

(5)    $s(X\backslash A)_\mu^\wedge = X\backslash sA_\mu^\vee$,  \qquad $s(X\backslash A)_\mu^\vee = X\backslash sA_\mu^\wedge$.
\end{lemma}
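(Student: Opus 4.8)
The plan is to derive everything directly from the two defining operators of Definition \ref{5}: $sA_\mu^\wedge$ is the intersection of the family $\mathcal{U}(A)=\{U:A\subset U,\ U\ \text{is}\ s\mu\text{-open}\}$ (taken to be $X$ when $\mathcal{U}(A)=\emptyset$), and $sA_\mu^\vee$ is the union of the family $\mathcal{F}(A)=\{F:F\subset A,\ F\ \text{is}\ s\mu\text{-closed}\}$ (taken to be $\emptyset$ when $\mathcal{F}(A)=\emptyset$). The one structural fact I will use repeatedly is the complementation correspondence $F\mapsto X\backslash F$ between $s\mu$-closed and $s\mu$-open sets coming from Definition \ref{4}, together with the observation that $\emptyset$ is $s\mu$-open (take $E=\emptyset$ in Definition \ref{4}, since $\overline{\emptyset_\mu}=\emptyset$) and hence that $X$ is $s\mu$-closed.

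For (1): since $\emptyset$ is $s\mu$-open we have $\emptyset\in\mathcal{U}(\emptyset)$, so the intersection defining $s\emptyset_\mu^\wedge$ is contained in $\emptyset$; dually every $F\in\mathcal{F}(\emptyset)$ satisfies $F\subset\emptyset$, giving $s\emptyset_\mu^\vee=\emptyset$. For $sX_\mu^\wedge$ I note that the only superset of $X$ is $X$ itself, so either $\mathcal{U}(X)=\{X\}$ or $\mathcal{U}(X)=\emptyset$, and in both cases (the latter by convention) the value is $X$; finally $X$ is $s\mu$-closed, so $X\in\mathcal{F}(X)$ and $sX_\mu^\vee=X$. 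Part (2) is then immediate: each member of $\mathcal{U}(A)$ contains $A$, so their intersection does (and if $\mathcal{U}(A)=\emptyset$ the value $X$ still contains $A$); dually each member of $\mathcal{F}(A)$ lies in $A$, so their union does.

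For the idempotency (3), write $K=sA_\mu^\wedge$. By (2), $K\subset sK_\mu^\wedge$. For the reverse inclusion I argue by excluded points: if $x\notin K$ then $x$ misses some $U_0\in\mathcal{U}(A)$, but $K\subset U_0$ by the very definition of the intersection, so $U_0\in\mathcal{U}(K)$ and thus $x\notin sK_\mu^\wedge$; hence $sK_\mu^\wedge\subset K$ and equality follows (the degenerate case $K=X$ being covered by (1)). The $\vee$-statement is the dual argument. For monotonicity (4), $A\subset B$ gives $\mathcal{U}(B)\subset\mathcal{U}(A)$, whence $\bigcap\mathcal{U}(A)\subset\bigcap\mathcal{U}(B)$, i.e. $sA_\mu^\wedge\subset sB_\mu^\wedge$; likewise $\mathcal{F}(A)\subset\mathcal{F}(B)$ yields $\bigcup\mathcal{F}(A)\subset\bigcup\mathcal{F}(B)$ for the $\vee$-part, after checking the empty-family cases.

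Finally, (5) is the crux and the place where the conventions must be watched. I will show that $F\mapsto X\backslash F$ is a bijection from $\mathcal{F}(A)$ onto $\mathcal{U}(X\backslash A)$, using that $F$ is $s\mu$-closed iff $X\backslash F$ is $s\mu$-open and that $F\subset A$ iff $X\backslash A\subset X\backslash F$. Taking complements of the union then gives $X\backslash sA_\mu^\vee=\bigcap\{X\backslash F:F\in\mathcal{F}(A)\}=\bigcap\mathcal{U}(X\backslash A)=s(X\backslash A)_\mu^\wedge$, with the empty-family case ($\mathcal{F}(A)=\emptyset$ forcing $\mathcal{U}(X\backslash A)=\emptyset$ and both sides equal to $X$) handled separately. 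The second identity of (5) follows by substituting $X\backslash A$ for $A$ in the first. The only genuine obstacle throughout is the bookkeeping of the two empty-family conventions in the degenerate cases; once those are pinned down, every part reduces to an elementary inclusion of set families.
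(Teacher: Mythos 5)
Your proof is correct, and in fact the paper offers nothing to compare it against: Lemma \ref{9} is stated with the citation (c.f.\ \cite{MS}) and no proof at all, being imported from Sarsak's paper. Your argument supplies the missing proof directly from Definitions \ref{4} and \ref{5}, which is the right level of generality: the family-inclusion arguments for (1), (2) and (4), the excluded-point argument for idempotency in (3), and the complementation bijection $F\mapsto X\backslash F$ combined with De Morgan for (5) are all sound, and you correctly isolate the empty-family conventions as the only delicate bookkeeping. One small caveat on a side remark: your justification that $\emptyset$ is $s\mu$-open ``since $\overline{\emptyset_\mu}=\emptyset$'' invokes a fact that can fail in a GT-space. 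A generalized topology need not cover $X$, and any point lying in no $\mu$-open set is vacuously a $\mu$-adherence point of every subset, so in general $\overline{\emptyset_\mu}=X\backslash\bigcup\{U:U\in\mu\}$, which may be nonempty. This does not damage your proof: to see that $\emptyset$ is $s\mu$-open you only need $E\subset\emptyset\subset\overline{E_\mu}$ with $E=\emptyset$, and the inclusion $\emptyset\subset\overline{\emptyset_\mu}$ holds trivially whatever that closure is; hence $X$ is $s\mu$-closed and the rest of your degenerate-case analysis stands. The blemish is in the stated reason, not in the claim it supports.
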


Note that if $ \mu $  is replaced by $ \lambda $ in the definition \ref{5}, lemma \ref{7} and remark \ref{8}  in a GTspace $ (X,\mu) $  then the similar properties of lemma \ref{7}, remark \ref{8} and  lemma \ref{9} can also be proved for the $ s\lambda $-open sets.

A set $ A $ of $ (X,\mu) $ is a $s\vee_\mu$-set if and only if $X - A$ is a $s\wedge_\mu $-set.

Clearly collection of all $ s\vee_\mu $-sets in a GTspace  $(X, \mu)$ forms a generalized topology.

\section{\bf $ s\lambda$-closed sets and $ sg_\lambda $-closed sets in GTspace and $ s\lambda T_\frac{1}{2} $ GTspace}

In this section we will discuss some properties of $ s\lambda$-closed sets, $ s\lambda $-open sets, $sg_\lambda$-closed sets and $ s\lambda T_\frac{1}{2} $ axiom in a GTspace which will be very much useful in the sequel.

\begin{definition}\label{10} (c.f.\cite{NL}). A subset $ A $ of $ X $ is said to be a $sg_\lambda$-closed set  if $ \overline{sA_\lambda}\subset U $ whenever $ A\subset U $ and $ U $ is $ s\lambda $-open. $A$ is called $sg_\lambda$-open  if $ X - A $ is $s g_\lambda$-closed.
\end{definition}

\begin{note}\label{11}
Clearly, a set $ A $ of $ X $ is $ sg_\lambda $-closed if and only if $ \overline{sA_\lambda}\subset sA_\lambda^\wedge $.
\end{note}

If $ A\subset X $, then it is easily verfied that $ X - \overline{s(X -A)}_\lambda = sInt_\lambda(A) $.

\begin{theorem}\label{12}
A subset $ A $ of $ X $ is $ sg_\lambda $-open if and only if $ F\subset sInt_\lambda (A) $ whenever $ F\subset A $ and $ F $ is $ s\lambda $-closed (or equivalently, $ sA_\lambda^\vee \subset sInt_\lambda (A) $).
\end{theorem}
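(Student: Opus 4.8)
The plan is to reduce the statement to the definition of $sg_\lambda$-closedness applied to the complement, and then convert everything into interior/closure language via the duality identity $X - \overline{s(X-A)_\lambda} = sInt_\lambda(A)$ recorded just above the theorem. Since $sg_\lambda$-openness of $A$ is, by Definition \ref{10}, nothing but $sg_\lambda$-closedness of $X - A$, I would begin by writing out the latter explicitly: $\overline{s(X-A)_\lambda} \subset U$ for every $s\lambda$-open set $U$ containing $X - A$.

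The central step is the complement bijection between the $s\lambda$-open sets $U$ with $X - A \subset U$ and the $s\lambda$-closed sets $F$ with $F \subset A$, given by $F = X - U$ (equivalently $U = X - F$). Under this correspondence the hypothesis $X - A \subset U$ is exactly $F \subset A$, and the conclusion $\overline{s(X-A)_\lambda} \subset U$ rewrites, upon complementing, as $F \subset X - \overline{s(X-A)_\lambda}$. Applying the identity $X - \overline{s(X-A)_\lambda} = sInt_\lambda(A)$ turns this into $F \subset sInt_\lambda(A)$. Chaining these equivalences shows that $A$ is $sg_\lambda$-open if and only if $F \subset sInt_\lambda(A)$ whenever $F \subset A$ and $F$ is $s\lambda$-closed, which is the first assertion.

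For the parenthetical reformulation, I would simply observe that $sA_\lambda^\vee$ is by definition the union of all $s\lambda$-closed sets contained in $A$ (the $s\vee_\lambda$ analogue of Definition \ref{5}). A union of sets is contained in $sInt_\lambda(A)$ precisely when each member is, so $sA_\lambda^\vee \subset sInt_\lambda(A)$ is equivalent to the statement that $F \subset sInt_\lambda(A)$ for every $s\lambda$-closed $F \subset A$; this closes the equivalence.

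I do not anticipate a genuine obstacle here: the argument is pure complement-chasing. The only points requiring care are keeping the direction of inclusions correct when passing to complements and invoking the supplied duality identity at the right moment; once the correspondence $U \leftrightarrow X - U$ is in place, the result is essentially a formal restatement of Definition \ref{10} for $X - A$.
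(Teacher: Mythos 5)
Your proof is correct and takes essentially the same route as the paper: both arguments reduce $sg_\lambda$-openness of $A$ to $sg_\lambda$-closedness of $X-A$ via the complement correspondence $U \leftrightarrow X-U$ and the identity $X - \overline{s(X-A)_\lambda} = sInt_\lambda(A)$, the paper merely writing the two directions separately rather than as a single chain of equivalences. Your explicit justification of the parenthetical form $sA_\lambda^\vee \subset sInt_\lambda(A)$, which the paper states but does not prove, is a small and valid addition.
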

\begin{proof}
Suppose $ A $ is $ sg_\lambda $-open and $ F\subset A ,  F $ is $ s\lambda $-closed. Then $ X-A\subset X-F $ where $ X-F $ is a $s\lambda $-open set and $ X-A $ is $ sg_\lambda $-closed. Then we have $ X- sInt_\lambda (A)=\overline{s(X-A)_\lambda}\subset X-F$, by definition and hence $ F\subset sInt_\lambda (A) $.

Conversely, suppose that $ X-A \subset U, U$
is $ s\lambda $-open. Then $ X-U\subset A $ and $ X-U $ is $ s\lambda $-closed. By assumption, $ X-U\subset sInt_\lambda (A) $ and so $ \overline{s(X-A)_\lambda}=X- sInt_\lambda (A)\subset U$ and hence $ X-A$ is $sg_\lambda $-closed. This implies $ A $ is $ sg_\lambda $-open.
\end{proof}

\begin{lemma}\label{13}
For $ A\subset X $, the following hold:

(1) $ A $ is $ s\lambda $-closed if and only if $
 A=sA_\mu^\wedge \cap \overline{sA_\mu} $ 

(2)  If $ A $ is $ s\lambda $-closed then $ A=sA_\mu^\wedge \cap \overline{sA_\lambda} $.

(3) If $ A $ is $ s\mu $-closed then $ A $ is $ s\lambda $-closed.
\end{lemma}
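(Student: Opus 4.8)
The plan is to prove the three parts in the order (3), (1), (2), since (3) feeds into (2) and (1) is the technical heart. For (3), if $A$ is $s\mu$-closed I would simply write $A = X \cap A$: by Lemma \ref{9}(1) the whole space $X$ is a $s\wedge_\mu$-set, and $A$ itself is the $s\mu$-closed factor, so $A$ is an intersection of a $s\wedge_\mu$-set with a $s\mu$-closed set, i.e. $s\lambda$-closed by Definition \ref{6}.

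For the forward direction of (1), assume $A = K \cap P$ with $K$ a $s\wedge_\mu$-set and $P$ $s\mu$-closed. The inclusion $A \subset sA_\mu^\wedge \cap \overline{sA_\mu}$ is automatic from Lemma \ref{9}(2) and Lemma \ref{7}(4). For the reverse inclusion I would use monotonicity: from $A \subset K$ and Lemma \ref{9}(4) together with $sK_\mu^\wedge = K$ I obtain $sA_\mu^\wedge \subset K$, while $A \subset P$ with $P$ $s\mu$-closed gives $\overline{sA_\mu} \subset \overline{sP_\mu} = P$ by parts (5) and (3) of Lemma \ref{7}. Intersecting, $sA_\mu^\wedge \cap \overline{sA_\mu} \subset K \cap P = A$. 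The converse direction is immediate from the closure identities: $sA_\mu^\wedge$ is a $s\wedge_\mu$-set by Lemma \ref{9}(3) and $\overline{sA_\mu}$ is $s\mu$-closed by Lemma \ref{7}(2), so any set of the form $sA_\mu^\wedge \cap \overline{sA_\mu}$ is an intersection of a $s\wedge_\mu$-set with a $s\mu$-closed set, hence $s\lambda$-closed by definition.

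Part (2) I would derive from (1) and (3) rather than re-run the decomposition argument, since $\overline{sA_\lambda}$ need not itself be $s\mu$-closed. The key observation is that by (3) every $s\mu$-closed set is $s\lambda$-closed, so the family of $s\lambda$-closed sets containing $A$ contains the family of $s\mu$-closed sets containing $A$; taking intersections (via the $\lambda$-analog of Lemma \ref{7}(1)) yields $\overline{sA_\lambda} \subset \overline{sA_\mu}$. Combined with $A \subset \overline{sA_\lambda}$ (the $\lambda$-analog of Lemma \ref{7}(4)), this gives $A \subset sA_\mu^\wedge \cap \overline{sA_\lambda} \subset sA_\mu^\wedge \cap \overline{sA_\mu}$, and the right-hand side equals $A$ by part (1). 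Hence all three terms coincide and $A = sA_\mu^\wedge \cap \overline{sA_\lambda}$.

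The only genuinely delicate point is the bridging inclusion $\overline{sA_\lambda} \subset \overline{sA_\mu}$ in (2); everything else is bookkeeping with the monotonicity and idempotence of $sA_\mu^\wedge$ and $\overline{sA_\mu}$. I expect the main thing to get right is the direction of the inclusions when passing between the $s\mu$- and $s\lambda$-operators (a larger closing family produces a smaller closure), and making sure the $\lambda$-analogs of Lemmas \ref{7} and \ref{9} invoked here are exactly the ones the paper has asserted to hold after Lemma \ref{9}.
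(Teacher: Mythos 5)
Your proposal is correct, and in fact the paper offers no proof at all for Lemma \ref{13} — it is stated bare (as a result adapted from Sarsak \cite{MS}) and immediately followed by Remark \ref{14} — so your argument supplies a proof the paper omits rather than paralleling one. Each step checks out: (3) is exactly the decomposition $A = X \cap A$ with $sX_\mu^\wedge = X$ from Lemma \ref{9}(1); in (1) the forward inclusion $sA_\mu^\wedge \cap \overline{sA_\mu} \subset K \cap P$ follows from monotonicity (Lemma \ref{9}(4), Lemma \ref{7}(5)) together with $sK_\mu^\wedge = K$ and $\overline{sP_\mu} = P$, and the converse is immediate from Lemma \ref{9}(3) and Lemma \ref{7}(2); and in (2) the bridging inclusion $\overline{sA_\lambda} \subset \overline{sA_\mu}$ is legitimate, since by (3) the $s\mu$-closed supersets of $A$ form a subfamily of the $s\lambda$-closed supersets, so the intersection over the larger family is smaller (equivalently, one can argue pointwise: every $s\mu$-open set is $s\lambda$-open, so a $s\lambda$-adherence point of $A$ is a $s\mu$-adherence point). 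The one hypothesis worth flagging explicitly is that (2) uses the $\lambda$-analogs of Lemma \ref{7}(1) and (4), which the paper asserts (after Lemma \ref{9}) but whose validity rests on $s\lambda$-open sets forming a generalized topology — a fact only established later in Theorem \ref{21}; your pointwise adherence-point variant of the bridging inclusion avoids even that dependence, so the proof is sound either way.
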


\begin{remark}\label{14}
Obviously in $ (X,\mu) $, every $s \wedge _\mu $-set  is $ s \lambda$-closed and $ s\mu $-closed set is $ s \lambda$-closed. But converses are not in general true as revealed from the  examples \ref{14} (i) and \ref{14} (ii). Clearly $ s \lambda$-closed set is $ sg_ \lambda$-closed but it is not reversible as shown by example \ref{14} (iii).  
\end{remark}

\begin{example}\label{15} (i):
Suppose $ X=\{a,b,c\}, \mu=\{\emptyset, \{a\}, \{a,b\}\} $. Then $ (X,\mu) $ is a GTspace but not a topological space. Now $ \{b\} $ is $ s\lambda $-closed as $ \{b\}=\{a,b\}\cap \{b,c\} $ where $ \{a,b\} $ is $ s\wedge_\mu $-set and $ \{b,c\}$ is  $ s\mu $-closed. But $ \{b\} $ is not $ s\wedge_\mu $-set since $ s\{b\}_\mu^\wedge=\{a,b\} $.

(ii):
Suppose $ X=\{a,b,c\}, \mu=\{\emptyset, \{a,b\}, \{b,c\}, X\} $. Then $ (X,\mu) $ is a GTspace but not a topological space. Then $ \{a,b\} $ is $ s\lambda $-closed as $ \{a,b\}=\{a,b\}\cap \{a,b,c\} $ where $ \{a,b\} $ is $ s\wedge_\mu $-set and $ \{a,b,c\}$ is  $ s\mu $-closed. But $ \{a,b\} $ is not $ s\mu$-closed since $ \{c\} $ is not $ s\mu $-open.

(iii):
Let $ X=\{a,b,c,d\}, \mu=\{\emptyset, \{a,b,c\}\} $. Then $ (X,\mu) $ is a GTspace but not a topological space. Take the subset $ \{a,d\}=B $ say. Since $ B $ is not $ s\mu $-open and not $ s\mu $-closed; so $ B $ is not $ s\lambda $-open. Subsets containing $ B $ are only $ \{a,b,d\} $ and $ \{a,c,d\} $ which are not $ s\mu $-open and not $ s\mu $-closed, hence these are not $ s\lambda $-open sets. So only $ s\lambda $-open set containing $ B $ is $ X $ and $ \overline{sB_\lambda}\subset X $. So $ B $ is $ sg_\lambda $-closed. 
Now it can be easily shown that $ sB_\mu^\wedge\cap \overline{sB_\mu}=X\cap X=X \not=B$, hence $ B $ is not $ s\lambda $-closed.
\end{example}

\begin{definition}\label{16}(c.f.\cite{MS}). Let $ (X,\mu) $ be a GTspace.
A set $A$ of $ X $ is called a semi generalised $\wedge_\lambda$-set  (in short $sg\wedge_\lambda$-set) if $sA_\lambda^\wedge\subset F$ whenever $F\supset A$ and $F$ is $ s\lambda $-closed. A set $A$ is called a semi generalized $ \vee_\lambda$-set  (in short $s g\vee_\lambda $-set) if $ X - A  $ is $s g\wedge_\lambda$-set.
\end{definition}

Clearly if a set $ A $ of $ X $ is a $ s\wedge_\lambda $-set (resp. $ s\vee_\lambda $-set), then $ A $ is $sg\wedge_\lambda$-set (resp. $ sg\vee_\lambda $-set).

\begin{theorem}\label{17} (1):
For each $ x \in X $, $ \{x\} $ is either $ s\lambda $-open or $ sg\vee_\lambda $-set.

(2): For each $ x \in X $, $ \{x\} $ is either $ s\lambda $-closed or $ \{x\} $ is $ sg_\lambda $-open.
\end{theorem}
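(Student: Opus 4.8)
The plan is to prove each part by a dichotomy argument that hinges on one elementary combinatorial observation: the complement $X - \{x\}$ of a singleton has exactly two supersets in $X$, namely $X - \{x\}$ itself and $X$. Consequently, any defining condition that quantifies over $s\lambda$-open (or $s\lambda$-closed) sets containing $X - \{x\}$ reduces to testing against at most these two sets, and in each case the first alternative of the theorem is precisely the statement that the smaller of the two is of the relevant type.

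For part (1), I would assume $\{x\}$ is not $s\lambda$-open and show it must then be a $sg\vee_\lambda$-set, i.e. that $X - \{x\}$ is a $sg\wedge_\lambda$-set. By Definition \ref{16} this requires $s(X-\{x\})_\lambda^\wedge \subset F$ for every $s\lambda$-closed $F$ with $F \supset X - \{x\}$. The two candidate supersets are $X - \{x\}$ and $X$. The hypothesis that $\{x\}$ is not $s\lambda$-open says exactly that $X - \{x\}$ is not $s\lambda$-closed, so that candidate is eliminated; the remaining candidate $F = X$ satisfies the inclusion trivially. Hence the defining condition holds and $X - \{x\}$ is a $sg\wedge_\lambda$-set.

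For part (2), I would argue symmetrically: assume $\{x\}$ is not $s\lambda$-closed, so that $X - \{x\}$ is not $s\lambda$-open, and deduce that $X - \{x\}$ is $sg_\lambda$-closed (which makes $\{x\}$ an $sg_\lambda$-open set). By Definition \ref{10} I must check $\overline{s(X-\{x\})_\lambda} \subset U$ for every $s\lambda$-open $U$ with $U \supset X - \{x\}$. Once again the only candidates are $X - \{x\}$, now excluded by hypothesis, and $X$, against which the inclusion is automatic.

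The step requiring genuine care — and the one I would isolate at the outset — is verifying that $X$ is simultaneously $s\lambda$-closed and $s\lambda$-open, since this is exactly what legitimizes taking $F = X$ (resp. $U = X$) as the surviving test set. That $X$ is $s\lambda$-closed follows from the fact that $\emptyset \in \mu$ is $s\mu$-open, so $X$ is $s\mu$-closed, and then Lemma \ref{13}(3) upgrades $s\mu$-closed to $s\lambda$-closed. Dually, the decomposition $\emptyset = \emptyset \cap X$ exhibits $\emptyset$ as the intersection of the $s\wedge_\mu$-set $\emptyset$ (Lemma \ref{9}(1)) with the $s\mu$-closed set $X$, so $\emptyset$ is $s\lambda$-closed and therefore $X$ is $s\lambda$-open by Definition \ref{6}. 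With these two facts in hand, both dichotomies close immediately, since the excluded candidate is dictated by the negated hypothesis and the inclusion against $X$ is vacuous.
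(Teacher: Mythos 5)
Your proof is correct and takes essentially the same route as the paper: the paper's argument that $\overline{s(X-\{x\})_\lambda}=X$ (resp. $s(X-\{x\})_\lambda^\wedge=X$) when $X-\{x\}$ fails to be $s\lambda$-closed (resp. $s\lambda$-open), followed by Note \ref{11} and Definition \ref{16}, is just your two-supersets observation in different packaging. Your side verification that $X$ is both $s\lambda$-open and $s\lambda$-closed is correct but not strictly necessary, since if $X$ failed to qualify the defining conditions would hold vacuously anyway.
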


\begin{proof} (1):
Suppose $ x\in X $ and $ \{x\} $  is not $ s\lambda $-open, then $ X-\{x\} $ is not $ s\lambda $-closed. So  $\overline{ s(X - \{x\})_\lambda}= X $. Then  $s( X-\{x\})_\lambda^\wedge\subset \overline{ s(X - \{x\})_\lambda} $ and hence $ X-\{x\} $ is $s g\wedge_\lambda $-set and so \{x\} is $s g\vee_\lambda $-set, by definition.

(2): Suppose $ x\in X $ and $ \{x\} $  is not $ s\lambda $-closed, then $ X-\{x\} $ is not $ s\lambda $-open. So  $ s(X - \{x\})_\lambda^\wedge= X $. Then  $s( X-\{x\})_\lambda^\wedge\supset \overline{ s(X - \{x\})_\lambda} $ and hence $ X-\{x\} $ is a $s g_\lambda $-closed set, by note \ref{11} and so \{x\} is a $s g_\lambda $-open set.
\end{proof}

\begin{theorem}\label{18}
For a subset $ A $ of a GTspace $(X,\mu)$ the following  hold:

(1) If  $ A $ is $ s\lambda $-closed, then $ A $ is $ sg_\lambda $-closed

(2) If $ A $ is $ sg_\lambda $-closed and $ s\lambda $-open, then $ A $ is $ s\lambda $-closed

(3) If $ A $ is $ sg_\lambda $-closed and $ A\subset B \subset  \overline{sA_\lambda}$, then $ B $ is $ sg_\lambda $-closed.
\end{theorem}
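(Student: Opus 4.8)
The plan is to handle the three parts in order, leaning throughout on the $s\lambda$-versions of Lemma \ref{7} that the remark following Lemma \ref{9} guarantees. In particular I will freely use that $A\subset\overline{sA_\lambda}$, that $A$ is $s\lambda$-closed precisely when $A=\overline{sA_\lambda}$, that $s\lambda$-closure is monotone, and that it is idempotent, i.e. $\overline{s(\overline{sA_\lambda})_\lambda}=\overline{sA_\lambda}$. Each part is a direct transcription of the classical Levine-type arguments for $g$-closed sets into the $s\lambda$-setting, so the work is in bookkeeping rather than in any single hard estimate.

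For (1), if $A$ is $s\lambda$-closed then $\overline{sA_\lambda}=A$. Hence for any $s\lambda$-open set $U$ with $A\subset U$ we immediately obtain $\overline{sA_\lambda}=A\subset U$, which is exactly the defining condition of Definition \ref{10} for $A$ to be $sg_\lambda$-closed. For (2), since $A$ is by hypothesis itself $s\lambda$-open and trivially $A\subset A$, applying the $sg_\lambda$-closedness of $A$ to the choice $U=A$ yields $\overline{sA_\lambda}\subset A$. Combining this with the always-valid inclusion $A\subset\overline{sA_\lambda}$ gives $A=\overline{sA_\lambda}$, so $A$ is $s\lambda$-closed.

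Part (3) is the step I expect to carry the actual content. Let $U$ be an arbitrary $s\lambda$-open set with $B\subset U$. From $A\subset B\subset U$ and the $sg_\lambda$-closedness of $A$ I obtain $\overline{sA_\lambda}\subset U$. On the other hand, the hypothesis $B\subset\overline{sA_\lambda}$ together with monotonicity of $s\lambda$-closure gives $\overline{sB_\lambda}\subset\overline{s(\overline{sA_\lambda})_\lambda}$, and idempotence collapses the right-hand side to $\overline{sA_\lambda}$. Chaining these inclusions, $\overline{sB_\lambda}\subset\overline{sA_\lambda}\subset U$, so $B$ satisfies Definition \ref{10} and is $sg_\lambda$-closed.

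The only point requiring care is ensuring that the monotonicity and idempotence of $s\lambda$-closure used in part (3) are genuinely licensed; these are precisely the $\lambda$-analogues of Lemma \ref{7}(4)--(5), which the paper has already declared to transfer from the $s\mu$-case, so no fresh obstacle arises. If one preferred to avoid invoking that transfer, the same conclusion in (3) could be reached by arguing directly from the definitions of $s\lambda$-closure as an intersection of $s\lambda$-closed sets, but the closure-operator route is cleaner and is the one I would present.
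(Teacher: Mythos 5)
Your proposal is correct and follows essentially the same route as the paper: part (1) directly from Definition \ref{10}, part (2) by taking $U=A$ and combining with $A\subset\overline{sA_\lambda}$, and part (3) by chaining $\overline{sB_\lambda}\subset\overline{s(\overline{sA_\lambda})_\lambda}=\overline{sA_\lambda}\subset U$ via monotonicity and idempotence of $s\lambda$-closure. The only cosmetic difference is that the paper records the full equality $\overline{sA_\lambda}=\overline{sB_\lambda}$ in (3), whereas you use only the one inclusion actually needed.
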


\begin{proof}
(1): This is obvious by definition.

(2): Let $ A $ be  $ sg_\lambda $-closed and $ s\lambda $-open. Then $ \overline{sA_\lambda}\subset A $ and so $ A $ is $ s\lambda $-closed.

(3): Let $ B\subset U, U $ is $ s\lambda $-open, then $ A\subset U $. By assumption $ A $ is $ sg_\lambda $-closed, then $ \overline{sA_\lambda}\subset U $. Again $ A\subset B $ implies $ \overline{sA_\lambda}\subset \overline{sB_\lambda} \subset \overline{s(\overline{sA_\lambda}})_\lambda=\overline{sA_\lambda} $, then $ \overline{sA_\lambda} =  \overline{sB_\lambda}$ and hence $ \overline{sB_\lambda} \subset U $. So $ B $ is $ sg_\lambda $-closed.
\end{proof}

\begin{theorem}\label{19}  Let $ (X,\mu) $ be a GTspace and $ A\subset X $ then

(1) $ A $  is $ sg_\lambda $-closed if and only if  $ \overline{sA_\lambda} - A $ does not contain any non-void $ s\lambda $-closed set.

(2) Let  $ A$ is $ s\wedge_\lambda $-set (resp. $ s\vee_\lambda $-set), then $ A $ is $ sg_\lambda $-closed (resp. $ sg_\lambda $-open ) if and only if $ A $ is $ s\lambda $-closed (resp. $ s\lambda $-open),

(3)  if $ sA_\lambda^\wedge $ is $ sg_\lambda $-closed (resp. $ sA_\lambda^\vee $ is $ sg_\lambda $-open),  then $ A $ is $ sg_\lambda $-closed (resp. $ sg_\lambda $-open),
\end{theorem}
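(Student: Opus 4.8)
The plan is to treat the three parts separately and to obtain each ``resp.'' (open) version from its ``closed'' counterpart by complementation, using that $A$ is $sg_\lambda$-open exactly when $X-A$ is $sg_\lambda$-closed, that $A$ is $s\lambda$-open exactly when $X-A$ is $s\lambda$-closed, and that $A$ is a $s\vee_\lambda$-set exactly when $X-A$ is a $s\wedge_\lambda$-set. Throughout I would invoke the $\lambda$-analogues of Lemma \ref{7}, Remark \ref{8} and Lemma \ref{9} that the excerpt records as valid, in particular that $\overline{sA_\lambda}$ is $s\lambda$-closed, that $A\subset\overline{sA_\lambda}$ and $A\subset sA_\lambda^\wedge$, that $s\lambda$-closure is monotone, and that $A$ is $s\lambda$-closed iff $A=\overline{sA_\lambda}$.

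For (1) I would argue by contradiction in both directions. For the forward implication, assume $A$ is $sg_\lambda$-closed and suppose $F$ is a non-void $s\lambda$-closed subset of $\overline{sA_\lambda}-A$. Then $A\cap F=\emptyset$, so $A\subset X-F$ with $X-F$ being $s\lambda$-open, and the defining property of $sg_\lambda$-closedness gives $\overline{sA_\lambda}\subset X-F$, i.e.\ $F\cap\overline{sA_\lambda}=\emptyset$; combined with $F\subset\overline{sA_\lambda}$ this forces $F=\emptyset$, a contradiction. For the converse, suppose $A\subset U$ with $U$ being $s\lambda$-open but $\overline{sA_\lambda}\not\subset U$. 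Then $F:=\overline{sA_\lambda}\cap(X-U)$ is non-void, it is $s\lambda$-closed as an intersection of two $s\lambda$-closed sets, and since $A\subset U$ it lies in $\overline{sA_\lambda}-A$, contradicting the hypothesis; hence $\overline{sA_\lambda}\subset U$ and $A$ is $sg_\lambda$-closed.

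For (2) the key tool is Note \ref{11}: $A$ is $sg_\lambda$-closed iff $\overline{sA_\lambda}\subset sA_\lambda^\wedge$. When $A$ is a $s\wedge_\lambda$-set we have $sA_\lambda^\wedge=A$, so this condition reads $\overline{sA_\lambda}\subset A$; together with the always-valid $A\subset\overline{sA_\lambda}$ it is equivalent to $A=\overline{sA_\lambda}$, that is, to $A$ being $s\lambda$-closed. The $s\vee_\lambda$/$sg_\lambda$-open statement is then obtained by applying this equivalence to $X-A$. For (3), let $A\subset U$ with $U$ being $s\lambda$-open; since $sA_\lambda^\wedge$ is the intersection of all $s\lambda$-open sets containing $A$, we get $sA_\lambda^\wedge\subset U$. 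As $sA_\lambda^\wedge$ is assumed $sg_\lambda$-closed, its defining property gives $\overline{s(sA_\lambda^\wedge)_\lambda}\subset U$, and monotonicity of closure together with $A\subset sA_\lambda^\wedge$ yields $\overline{sA_\lambda}\subset\overline{s(sA_\lambda^\wedge)_\lambda}\subset U$, so $A$ is $sg_\lambda$-closed; the open version again follows by complementation.

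The step I expect to need the most care is the converse of (1), where I use that $\overline{sA_\lambda}\cap(X-U)$ is again $s\lambda$-closed. This rests on the $s\lambda$-closed sets being closed under finite intersection, which follows because the $s\lambda$-open sets form a generalized topology (closed under arbitrary unions), whence their complements are closed under arbitrary intersections. I would record this closure property explicitly before running the argument, so that the formation of the auxiliary set $F$ is fully justified.
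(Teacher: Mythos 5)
Your proposal is correct, and for part (1) it is essentially the paper's own argument: both directions run by contradiction with the same auxiliary sets $X-F$ and $\overline{sA_\lambda}\cap(X-V)$, and part (2) for $s\wedge_\lambda$-sets uses Note \ref{11} exactly as the paper does. The genuine differences are these. First, you obtain all the ``resp.'' (open) statements by complementation, using the dualities $s(X\backslash A)_\lambda^\wedge = X\backslash sA_\lambda^\vee$ and the fact that $A$ is $sg_\lambda$-open (resp.\ $s\lambda$-open, a $s\vee_\lambda$-set) iff $X-A$ is $sg_\lambda$-closed (resp.\ $s\lambda$-closed, a $s\wedge_\lambda$-set); the paper instead re-runs each argument directly in terms of $sInt_\lambda$ via Theorem \ref{12}. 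Your device is more systematic and halves the work, at the cost of invoking the $\lambda$-analogue of Lemma \ref{9}(5). Second, in part (3) you argue straight from the definition: any $s\lambda$-open $U\supset A$ already contains $sA_\lambda^\wedge$, so $\overline{sA_\lambda}\subset\overline{s(sA_\lambda^\wedge)_\lambda}\subset U$. The paper instead routes through part (2) (via Lemma \ref{9}(3)) to conclude first that $sA_\lambda^\wedge$ is $s\lambda$-closed and then applies Note \ref{11}; both are equally short, but yours does not need (2) as a stepping stone. Third, you explicitly flag and propose to justify the fact that the intersection of two $s\lambda$-closed sets is $s\lambda$-closed, which the converse of (1) needs; the paper uses this silently even though it is only established later (Lemma \ref{20} and Theorem \ref{21}). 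Making it explicit is an improvement in rigor, but note that your stated justification (``the $s\lambda$-open sets form a generalized topology'') is itself only proved in Theorem \ref{21}; the clean self-contained route is to intersect the representations $A_i=K_i\cap P_i$ and use Lemma \ref{20} together with closure of $s\mu$-closed sets under intersection, which is exactly the paper's later proof.
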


\begin{proof} (1): Suppose  $ A $  is a $ sg_\lambda $-closed set and $ P $ is a non-empty $ s\lambda $-closed set and $ P\subset  \overline{sA_\lambda} - A $. Now $ A\subset X-P $, a $ s\lambda $-open set, so $ \overline{sA_\lambda}\subset X-P $, by definition. This implies that $ P\subset X-\overline{sA_\lambda} $. Thus $ P\subset \overline{sA_\lambda}\cap (X-\overline{sA_\lambda})=\emptyset $.

Conversely, let the condition hold and we have to prove that $ A $  is $ sg_\lambda $-closed. Let $ A\subset V $ where $ V $ is $ s\lambda $-open. If $ \overline{sA_\lambda} \not\subset V$ then $ \overline{sA_\lambda}\cap (X-V) $ is a non-void $ s\lambda $-closed set contained in $\overline{sA_\lambda}-A$, a contradiction to the supposition. So $ A $ is $ sg_\lambda $-closed.

(2): Let $ A $ be a $ s\wedge_\lambda $-set and $ sg_\lambda $-closed set. Then by note \ref{11}, $ \overline{sA_\lambda} \subset sA_\lambda^\wedge =A $, so $ \overline{sA_\lambda} = A $ and hence $ A $ is $ s\lambda $-closed. On the other hand, a $ s\lambda $-closed set is obviously a $ sg_\lambda $-closed. Hence the result follows. 

Again, let $ A $ be a $ s\vee_\lambda $-set and $ sg_\lambda $-open set. Then by theorem \ref{13}, $ sInt_\lambda(A)\supset  sA_\lambda^\vee =A $, so $ sInt_\lambda(A) = A $ and hence $ A $ is $ s\lambda $-open. On the other hand, a $ s\lambda $-open set is obviously a $ sg_\lambda $-open. Hence the result follows.

(3): Suppose $ sA_\lambda^\wedge $ is $ sg_\lambda $-closed. Since the set $sA_\lambda^\wedge $ is a $ s\wedge_\lambda $-set by lemma \ref{9} (3), then from (2) we get $ sA_\lambda^\wedge $ is $ s\lambda $-closed. Since $ A\subset sA_\lambda^\wedge $ which implies $ \overline{s A_\lambda}\subset \overline{s(sA_\lambda^\wedge)_\lambda}=sA_\lambda^\wedge$. Hence $ A $ is $ sg_\lambda $-closed by note \ref{11}. 

Again, suppose $ sA_\lambda^\vee $ is $ sg_\lambda $-open. Since the set $sA_\lambda^\vee $ is a $ s\vee_\lambda $-set by lemma \ref{9} (3), then from (2) we get $ sA_\lambda^\vee $ is $ s\lambda $-open. Since $ A\supset sA_\lambda^\vee $ which implies $ sInt_\lambda (A)\supset sInt_\lambda(sA_\lambda^\vee)=sA_\lambda^\vee$. Hence $ A $ is $ sg_\lambda $-open by theorem \ref{12}.
\end{proof}

\begin{lemma}\label{20}
In a GTspace $(X,\mu)$,
arbitrary intersection of $ s\wedge_\mu $-sets is a $ s\wedge_\mu $-set.
\end{lemma}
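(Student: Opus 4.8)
The plan is to show directly that the intersection coincides with its own $\wedge$-hull. Let $\{A_j\}_{j\in J}$ be an arbitrary family of $s\wedge_\mu$-sets and set $A=\bigcap_{j\in J}A_j$; I must verify that $A=sA_\mu^\wedge$, which is exactly the defining condition for $A$ to be a $s\wedge_\mu$-set. One inclusion is automatic: by Lemma \ref{9}(2) we always have $A\subset sA_\mu^\wedge$, so the entire content of the proof lies in establishing the reverse inclusion $sA_\mu^\wedge\subset A$.

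For that reverse inclusion I would exploit the monotonicity of the $\wedge$-operator together with the defining property of each member of the family. Fix $j\in J$. Since $A\subset A_j$, Lemma \ref{9}(4) gives $sA_\mu^\wedge\subset s(A_j)_\mu^\wedge$; and because $A_j$ is a $s\wedge_\mu$-set we have $s(A_j)_\mu^\wedge=A_j$, whence $sA_\mu^\wedge\subset A_j$. As $j\in J$ was arbitrary, $sA_\mu^\wedge\subset\bigcap_{j\in J}A_j=A$. Combining the two inclusions yields $A=sA_\mu^\wedge$, so $A$ is a $s\wedge_\mu$-set.

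An equivalent route, if one prefers a more structural phrasing, is to first record that a set is a $s\wedge_\mu$-set precisely when it is an intersection of $s\mu$-open sets. Each $A_j$, being equal to $s(A_j)_\mu^\wedge=\bigcap\{U:A_j\subset U,\ U\text{ is }s\mu\text{-open}\}$, is such an intersection, hence so is $A$; and any intersection $B$ of $s\mu$-open sets satisfies $sB_\mu^\wedge\subset B$ (each $s\mu$-open set in the family contains $B$, so by Lemma \ref{9}(4) it contains $sB_\mu^\wedge$), which with Lemma \ref{9}(2) forces $B=sB_\mu^\wedge$. This is merely the same monotonicity argument repackaged.

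I do not anticipate a genuine obstacle: the argument is pure monotonicity and needs nothing beyond parts (2) and (4) of Lemma \ref{9}. The only point that deserves a moment of care is the boundary convention in Definition \ref{5}, which sets $sB_\mu^\wedge=X$ when no $s\mu$-open set contains $B$ (and one should keep in mind that $X$ itself need not be $s\mu$-open in a GTspace). These edge cases are, however, already absorbed into Lemma \ref{9}(1),(2),(4), so by invoking those statements rather than re-deriving the hull from scratch one keeps them harmless — including the degenerate case $J=\emptyset$, where $A=X=sX_\mu^\wedge$ by Lemma \ref{9}(1).
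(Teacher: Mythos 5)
Your proof is correct and follows essentially the same route as the paper's: both establish $sA_\mu^\wedge\subset A$ by applying the monotonicity of the $\wedge$-operator (Lemma \ref{9}(4)) to each $A\subset A_j$ together with $s(A_j)_\mu^\wedge=A_j$, and then combine with $A\subset sA_\mu^\wedge$ from Lemma \ref{9}(2). The alternative phrasing and edge-case remarks you add are fine but do not change the argument.
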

\begin{proof}
Let  $ A_\alpha, \alpha\in \Delta, \Delta $ being an index set, be an arbitrary collection of $ s\wedge_\mu $-sets. Then for each $ \alpha, A_\alpha=s(A_\alpha)_\mu^\wedge $. Let $ A=\bigcap\{A_\alpha; \alpha\in\Delta\} $. So $ A_\alpha \supset A $ for each $ \alpha\in\Delta $. Therefore $ s(A_\alpha)_\mu^\wedge \supset sA_\mu^\wedge$ for all $ \alpha\in\Delta $. This implies that $ sA_\mu^\wedge \subset  \bigcap s(A_\alpha)_\mu^\wedge=\bigcap A_\alpha=A\subset sA_\mu^\wedge$. Hence $ A= sA_\mu^\wedge $.
\end{proof}

\begin{theorem}\label{21} Suppose $ (X,\mu) $ is a GTspace then the followin hold:

(1) If $ A_i, i\in I$,   are $ s\lambda $-closed sets of $ X $, $ I $ being an index set, then $ \bigcap_iA_i $ is $ s\lambda $-closed.

(2) If $ A_i, i\in I $, are $ s\lambda $-open sets of $ X $, $ I $ being an index set, then $ \bigcup_iA_i $ is $ s\lambda $-open.
\end{theorem}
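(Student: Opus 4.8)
The plan is to prove (1) directly from the structural definition of $s\lambda$-closedness (Definition \ref{6}) and then deduce (2) by complementation. For (1), since each $A_i$ is $s\lambda$-closed, I would begin by writing $A_i = K_i \cap P_i$, where $K_i$ is a $s\wedge_\mu$-set and $P_i$ is an $s\mu$-closed set. The elementary set-algebra identity $\bigcap_i (K_i \cap P_i) = \left(\bigcap_i K_i\right) \cap \left(\bigcap_i P_i\right)$ then lets me separate the intersection into its two types of factors, so it suffices to show that the first factor is a $s\wedge_\mu$-set and the second is $s\mu$-closed.

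The factor $\bigcap_i K_i$ is a $s\wedge_\mu$-set by Lemma \ref{20}, which is precisely the statement that an arbitrary intersection of $s\wedge_\mu$-sets is again a $s\wedge_\mu$-set. For the factor $\bigcap_i P_i$, I would invoke the remark recorded just after Lemma \ref{7}, namely that the collection of $s\mu$-open sets forms a generalized topology and is therefore closed under arbitrary unions; passing to complements via De Morgan's laws shows that an arbitrary intersection of $s\mu$-closed sets is $s\mu$-closed, so $\bigcap_i P_i$ is $s\mu$-closed. Combining the two, $\bigcap_i A_i$ is exhibited as the intersection of a $s\wedge_\mu$-set with an $s\mu$-closed set and is hence $s\lambda$-closed by Definition \ref{6}.

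For (2) I would argue purely by duality. If each $A_i$ is $s\lambda$-open, then each $X - A_i$ is $s\lambda$-closed by the definition of $s\lambda$-openness, and $X - \bigcup_i A_i = \bigcap_i (X - A_i)$ is $s\lambda$-closed by part (1); therefore $\bigcup_i A_i$ is $s\lambda$-open. I do not anticipate a genuine obstacle in this proof: the whole argument rests on the two closure properties already in hand (Lemma \ref{20} for $s\wedge_\mu$-sets and the generalized-topology property of $s\mu$-open sets). The only step deserving mild care is the separation of the factors in the first paragraph, since one must check that the $s\wedge_\mu$ and $s\mu$-closed parts of the intersection can indeed be collected independently before the respective closure lemmas are applied.
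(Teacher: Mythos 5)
Your proposal is correct and follows essentially the same route as the paper: decompose each $A_i = K_i \cap P_i$, regroup the intersection, apply Lemma \ref{20} to the $s\wedge_\mu$-factors, use the generalized-topology property of $s\mu$-open sets for the $s\mu$-closed factors, and obtain (2) from (1) by complementation. If anything, your justification of the $s\mu$-closed factor is slightly more explicit than the paper's, which attributes both closure facts to Lemma \ref{20} alone.
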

\begin{proof}
(1) Suppose $ A_i, i\in I $, are $ s\lambda $-closed sets, $ I $ being an index set, then for each $ i $ there exist a $ s\wedge_\mu $-set $ K_i $ and a $ s\mu $-closed set $ P_i $ such that $ A_i= K_i\cap P_i $. So we get $ \bigcap A_i=\bigcap (K_i\cap P_i)=(\bigcap K_i)\bigcap (\bigcap P_i) $. By lemma \ref{20} above, $ \bigcap K_i $ is a $ s\wedge_\mu $-set and $ \bigcap P_i $ is a $ s\mu $-closed set. This shows that $ \bigcap A_i $ is $ s\lambda $-closed.

(2) Suppose $ A_i, i\in I$, are $ s\lambda $-open sets, $I $ being an index set. Then $ X-A_i $ is $ s\lambda $-closed set for each i and $ X-\bigcup A_i=\bigcap (X-A_i) $. Therefore by (1), $ X-\bigcup A_i $ is $ s\lambda $-closed and hence $ \bigcup A_i $ is $ s\lambda $-open.
\end{proof}

Hence we can say that collection  of $ s\lambda $-open sets in $ (X,\mu) $ forms a genealized topology.

\begin{definition} (c.f.\cite{WD})\label{22}.  A GTspace
$ (X,\mu) $ is said to be $ s\lambda T_\frac{1}{2} $  if  every $ sg_\lambda $-closed set is $ s\lambda $-closed.
\end{definition}

\begin{theorem}\label{23}(c.f.\cite{MS}).
In a GTspace $ (X,\mu) $, folloing are equivalent:

(1) $ (X,\mu) $ is $ s\lambda T_\frac{1}{2} $ GTspace.

(2) Every singleton of $ X $ is either $ s\lambda $-open or, $ s\lambda $-closed.

(3) Every $ sg\wedge_\lambda $-set is $ s\wedge_\lambda $-set.
\end{theorem}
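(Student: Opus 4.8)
The plan is to deduce the three equivalences from the point‑by‑point dichotomies recorded in Theorem \ref{17}, by proving $(1)\Leftrightarrow(2)$ and $(2)\Leftrightarrow(3)$ separately; each of the four implications reduces to a two‑case analysis on a single point. Throughout I would invoke the $\lambda$‑analogues of Lemma \ref{7} that the paper has declared valid: namely that $\overline{sA_\lambda}$ is the smallest $s\lambda$‑closed set containing $A$, that $A$ is $s\lambda$‑closed iff $A=\overline{sA_\lambda}$, and that $x\in\overline{sA_\lambda}$ exactly when every $s\lambda$‑open set containing $x$ meets $A$; I would also use Note \ref{11}, Theorem \ref{19}(1), and Definition \ref{16}.

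For $(1)\Rightarrow(2)$ I would take $x\in X$ and apply Theorem \ref{17}(2): either $\{x\}$ is $s\lambda$‑closed and there is nothing to prove, or $\{x\}$ is $sg_\lambda$‑open, whence $X-\{x\}$ is $sg_\lambda$‑closed, hence $s\lambda$‑closed by $(1)$, so $\{x\}$ is $s\lambda$‑open. For $(2)\Rightarrow(1)$ I would argue by contradiction: if $A$ is $sg_\lambda$‑closed but not $s\lambda$‑closed, then $A\subsetneq\overline{sA_\lambda}$, so I may pick $x\in\overline{sA_\lambda}-A$. By $(2)$, $\{x\}$ is $s\lambda$‑closed or $s\lambda$‑open. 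In the first case $\{x\}$ is a non‑void $s\lambda$‑closed subset of $\overline{sA_\lambda}-A$, contradicting Theorem \ref{19}(1); in the second case, since $x\in\overline{sA_\lambda}$, the $s\lambda$‑open set $\{x\}$ must meet $A$, forcing $x\in A$, again a contradiction.

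The equivalence $(2)\Leftrightarrow(3)$ runs along the same lines but uses the $\wedge$‑side of Theorem \ref{17}. For $(2)\Rightarrow(3)$, let $A$ be a $sg\wedge_\lambda$‑set; since $A\subseteq sA_\lambda^\wedge$ always, it suffices to show $sA_\lambda^\wedge\subseteq A$, so suppose $x\in sA_\lambda^\wedge-A$. If $\{x\}$ is $s\lambda$‑closed, then $X-\{x\}$ is an $s\lambda$‑open set containing $A$ but omitting $x$, contradicting $x\in sA_\lambda^\wedge$. If $\{x\}$ is $s\lambda$‑open, I would first note that $\overline{sA_\lambda}$ is an $s\lambda$‑closed set containing $A$, so Definition \ref{16} gives $sA_\lambda^\wedge\subseteq\overline{sA_\lambda}$; hence $x\in\overline{sA_\lambda}$, the $s\lambda$‑open set $\{x\}$ meets $A$, and $x\in A$, a contradiction. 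For $(3)\Rightarrow(2)$ I would take $x\in X$ and apply Theorem \ref{17}(1): if $\{x\}$ is $s\lambda$‑open we are done, while if $\{x\}$ is a $sg\vee_\lambda$‑set then $X-\{x\}$ is a $sg\wedge_\lambda$‑set, hence a $s\wedge_\lambda$‑set by $(3)$, so $\{x\}$ is a $s\vee_\lambda$‑set.

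The step I expect to be the main obstacle is completing $(3)\Rightarrow(2)$: I must check that a singleton which is a $s\vee_\lambda$‑set is automatically $s\lambda$‑closed. This follows because $s\{x\}_\lambda^\vee$ is the union of all $s\lambda$‑closed sets contained in $\{x\}$, so it equals $\{x\}$ precisely when $\{x\}$ is itself $s\lambda$‑closed and equals $\emptyset$ otherwise; thus $\{x\}=s\{x\}_\lambda^\vee$ forces $\{x\}$ to be $s\lambda$‑closed. The remaining care is purely bookkeeping — correctly tracking which of $\{x\}$ and $X-\{x\}$ is open versus closed in each branch, and reading off $sA_\lambda^\wedge\subseteq\overline{sA_\lambda}$ from the $sg\wedge_\lambda$ condition. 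Assembling the four implications then yields $(1)\Leftrightarrow(2)\Leftrightarrow(3)$.
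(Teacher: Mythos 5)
Your proof is correct, but it is organized differently from the paper's. The paper establishes the single cycle $(1)\Rightarrow(2)\Rightarrow(3)\Rightarrow(1)$, while you prove the two equivalences $(1)\Leftrightarrow(2)$ and $(2)\Leftrightarrow(3)$ as four short implications. Your $(1)\Rightarrow(2)$ coincides with the paper's, and your $(2)\Rightarrow(3)$ is the same two-case analysis with a harmless detour in the $s\lambda$-open case: you first derive $sA_\lambda^\wedge\subset\overline{sA_\lambda}$ and then use the adherence characterization, where the paper simply applies Definition \ref{16} to the $s\lambda$-closed set $X-\{x\}$. The genuinely different pieces are your $(2)\Rightarrow(1)$, which runs through Theorem \ref{19}(1) (a result the paper never invokes in this proof), and your $(3)\Rightarrow(2)$, which requires the extremal observation that a singleton $s\vee_\lambda$-set must be $s\lambda$-closed; you identify this as the main obstacle and resolve it correctly. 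It is worth noting that the paper's $(3)\Rightarrow(1)$ quietly relies on the exact dual of that observation: it asserts that the co-singleton $s\wedge_\lambda$-set $X-\{x\}$ is $s\lambda$-open, which is not a general property of $s\wedge_\lambda$-sets (an intersection of $s\lambda$-open sets need not be $s\lambda$-open) and holds here only because the sole proper superset of $X-\{x\}$ is $X$, a justification the paper omits. So your decomposition costs one extra implication, but it makes this delicate point explicit where you need it and avoids it entirely in the $(2)\Rightarrow(1)$ leg, whereas the paper's cycle is shorter but leans on an unjustified step.
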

\begin{proof}
(1) $\Rightarrow$ (2): Let $ (X,\mu) $ be a $ s\lambda T_\frac{1}{2} $ GTspace and $ \{x\}\in X $. By theorem \ref{17} (2), $\{x\}$ is either $ s\lambda $-closed or $ X-\{x\} $ is $ sg_\lambda $-closed. If $ X-\{x\} $ is $ sg_\lambda $-closed then $ X-\{x\} $ is $ s\lambda $-closed since $ (X,\mu) $ is $ s\lambda T_\frac{1}{2} $ and so $ \{x\} $ is $ s\lambda $-open.

(2) $\Rightarrow$ (3): Suppose every singleton of $ X $ is either $ s\lambda $-open or, $ s\lambda $-closed. Let $ A \subset X $ be a $ sg\wedge_\lambda $-set which is not $ s\wedge_\lambda $-set. Then $ sA_\lambda^\wedge\not\subset A$. So there exists $ x\in sA_\lambda^\wedge, x\not\in A $. Then by supposition, $\{x\}$ is either $ s\lambda $-open or $ s\lambda $-closed.

Case (i): If $\{x\} $ is $ s\lambda $-open, then $ X-\{x\} $ is $ s\lambda $-closed containing $ A $. But $ A $ is a $ sg\wedge_\lambda $-set. So $ sA_\lambda^\wedge \subset X-\{x\}$ implies $ x\not\in  sA_\lambda^\wedge $, a contradiction.

Case (ii): If $\{x\} $ is $ s\lambda $-closed, then $ X-\{x\} $ is $ s\lambda $-open containing $ A $. But $ x\in   sA_\lambda^\wedge$. So $ x\in X-\{x\} $, a contradiction. Hence the result.

(3) $\Rightarrow$ (1) : Suppose every $ sg\wedge_\lambda $-set is $ s\wedge_\lambda $-set and $ (X,\mu) $ is not $ s\lambda T_\frac{1}{2} $ GTspace. Then there exists a $ sg_\lambda $-closed set $ A $ which is not $ s\lambda $-closed. Since $ A $ is not $ s\lambda $-closed, there is a point $ x\in \overline{sA_\lambda} $ such that $ x\not\in A $. By Theorem \ref{17} (1), $ \{x\} $ is either $ s\lambda $-open or, $ X-\{x\} $ is a $ sg\wedge_\lambda $-set. Suppose $\{x\} $ is  $ s\lambda $-open. Since $ x\in \overline{sA_\lambda}, \{x\}\cap A\not=\emptyset$ which implies $ x\in A $. If $ X-\{x\} $ is a $ sg\wedge_\lambda $ set, then it is $ s\wedge_\lambda $-set, so $ X-\{x\} $ is a $ s\lambda $-open set containing $ A $. Since $ A $ is $ sg_\lambda $-closed,  $\overline{sA_\lambda}\subset X-\{x\} $ which contradicts $ x\in \overline{sA_\lambda} $. So $ x\in A $, hence $ A $ is $ s\lambda $-closed and  $ (X,\mu) $ is $ s\lambda T_\frac{1}{2} $ GTspace.
\end{proof}

\begin{theorem}\label{24}  Suppose $ A \subset (X,\mu) $, then we have the following theorems for $s \lambda $-open sets the proof of which can be deduced easily from the definition.

(1)  $ A $    is  $s \lambda $-open if and only if  $ A = N\cup H $,  $ N $ is a $s \vee_\mu $-set  and $ H $ is a $ s\mu $-open set.

(2)  $ A $   is  $s \lambda $-open if and only if  $ A = sA_\mu^\vee \cup sInt_\mu(A) $.
\end{theorem}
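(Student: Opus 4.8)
The plan is to obtain both equivalences from their $s\lambda$-closed counterparts by passing to complements, since by Definition \ref{6} a set is $s\lambda$-open precisely when its complement is $s\lambda$-closed. For part (1), I would start from the defining decomposition: $A$ is $s\lambda$-open iff $X-A$ is $s\lambda$-closed, i.e. $X-A = K\cap P$ with $K$ a $s\wedge_\mu$-set and $P$ a $s\mu$-closed set. Taking complements gives $A = (X-K)\cup(X-P)$. Here $X-K$ is a $s\vee_\mu$-set (using the stated fact that $A$ is a $s\vee_\mu$-set iff $X-A$ is a $s\wedge_\mu$-set) and $X-P$ is $s\mu$-open, so setting $N=X-K$ and $H=X-P$ yields the required form. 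The converse runs the same computation backward: if $A=N\cup H$ with $N$ a $s\vee_\mu$-set and $H$ a $s\mu$-open set, then $X-A=(X-N)\cap(X-H)$ exhibits $X-A$ as an intersection of a $s\wedge_\mu$-set with a $s\mu$-closed set, hence $X-A$ is $s\lambda$-closed and $A$ is $s\lambda$-open.

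For part (2), I would apply Lemma \ref{13}(1) to $X-A$: the set $A$ is $s\lambda$-open iff $X-A = s(X-A)_\mu^\wedge \cap \overline{s(X-A)_\mu}$. Complementing both sides gives
\[
A = \bigl(X - s(X-A)_\mu^\wedge\bigr)\cup\bigl(X-\overline{s(X-A)_\mu}\bigr).
\]
By Lemma \ref{9}(5) the first term equals $sA_\mu^\vee$, and by the complement identity $X-\overline{s(X-A)_\mu}=sInt_\mu(A)$ (the $\mu$-analogue of the identity recorded after Note \ref{11}) the second term is $sInt_\mu(A)$. This establishes $A = sA_\mu^\vee \cup sInt_\mu(A)$ whenever $A$ is $s\lambda$-open. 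For the converse, if $A=sA_\mu^\vee\cup sInt_\mu(A)$, I would invoke part (1): by Lemma \ref{9}(3) the set $sA_\mu^\vee$ is a $s\vee_\mu$-set, and by Remark \ref{8}(3) the set $sInt_\mu(A)$ is $s\mu$-open, so $A$ is the union of a $s\vee_\mu$-set and a $s\mu$-open set and is therefore $s\lambda$-open.

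The arguments are essentially bookkeeping with De Morgan duality, so there is no deep obstacle; the only point requiring care is the correct use of the complement laws, specifically Lemma \ref{9}(5) and the semi-interior/semi-closure complement identity in the $\mu$-version. I would verify $X-\overline{s(X-A)_\mu}=sInt_\mu(A)$ explicitly, since this is the one ingredient stated only for $\lambda$ in the text, though it holds verbatim for $\mu$ by the same reasoning. Once these two identities are in hand, both equivalences follow immediately, which is why the statement can reasonably be said to be deducible directly from the definitions.
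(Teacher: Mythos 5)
Your proof is correct and is exactly the argument the paper intends: the paper states Theorem \ref{24} without proof, remarking only that it ``can be deduced easily from the definition,'' and your De Morgan dualization --- Definition \ref{6} for part (1), and Lemma \ref{13}(1) combined with Lemma \ref{9}(5) and the identity $X - \overline{s(X-A)_\mu} = sInt_\mu(A)$ for part (2) --- is precisely that deduction, carried out correctly. The one point you flag for care, namely that the interior/closure complement identity is stated in the text only in its $\lambda$-form, is indeed the only subtlety, and your proposed verification of the $\mu$-version is valid since the $s\mu$-open sets form a generalized topology on $X$.
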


Clearly in $ (X,\mu), s \vee_\mu $-sets are $s \lambda $-open sets and $ s\mu $-open sets are $s \lambda $-open sets.

\section{\bf   $s\lambda T_\frac{1}{4}$ GTspace,  $s\lambda T_\frac{3}{8}$ GTspace and  $s\lambda$-homeomorphism}

In this section we introduce $s\lambda T_\frac{1}{4}$ GTspace,  $s\lambda T_\frac{3}{8}$ GTspace and investigate some of their properties including relation with $s\lambda T_0, s\lambda T_1 $ and $ s\lambda T_\frac{1}{2} $ and discuss $s\lambda$-homeomorphism.

\begin{definition}\label{25} Suppose $(X, \mu)$ is a GTspace, then 

(1)  it  is called  $ s\lambda T_0 $ if for any two distinct points $ x,y $ of $ X $, there exists a $ s\lambda $-open set $ U $ which contains only one of the points.

(2) it is called  $ s\lambda T_1$ if for any two distinct points $x, y \in X$, there are $ s\lambda $-open sets  $ U, V $ such that $x \in U,  y \not \in U,  y\in V , x\not\in V$.
\end{definition}

\begin{theorem}\label{26}
$(X,  \mu)$  is $s\lambda T_0$  if and only if for any pair of distinct points $x, y \in X $,  there is a set $A$ containing only one of the points such that $ A $ is either $ s\lambda $-open or $ s\lambda $-closed.
\end{theorem}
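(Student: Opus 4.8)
The plan is to establish both implications directly from the definitions, since the statement is essentially a reformulation of the $s\lambda T_0$ condition in terms of the duality between $s\lambda$-open and $s\lambda$-closed sets. The forward direction is immediate: assuming $(X,\mu)$ is $s\lambda T_0$ and $x \neq y$, Definition \ref{25}(1) furnishes a $s\lambda$-open set $U$ containing exactly one of $x,y$; setting $A = U$ gives a set of the required type (in fact $s\lambda$-open), so nothing further is needed.

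For the converse I would fix distinct points $x,y \in X$ and invoke the hypothesis to obtain a set $A$ containing exactly one of them, with $A$ either $s\lambda$-open or $s\lambda$-closed, and then split into two cases. If $A$ is $s\lambda$-open, there is nothing to prove, since $A$ itself is a $s\lambda$-open set containing exactly one of the two points and hence witnesses the $s\lambda T_0$ property. If instead $A$ is $s\lambda$-closed, the decisive step is to pass to the complement: by Definition \ref{6}, $X - A$ is $s\lambda$-open. Because $A$ meets $\{x,y\}$ in exactly one point, its complement $X - A$ meets $\{x,y\}$ in exactly the remaining point, so $X - A$ is a $s\lambda$-open set containing precisely one of $x,y$, which is exactly what the $s\lambda T_0$ condition demands.

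I do not expect a genuine obstacle here; the whole argument rests on the elementary fact, recorded in Definition \ref{6}, that $X - A$ is $s\lambda$-open whenever $A$ is $s\lambda$-closed. The only point meriting a word of care is the bookkeeping under complementation: if $A$ contains $x$ but not $y$ then $X - A$ contains $y$ but not $x$, and symmetrically if $A$ contains $y$ but not $x$ then $X - A$ contains $x$ but not $y$; in either subcase the complement again isolates a single point of the pair, so the conclusion holds regardless of which of $x,y$ the set $A$ originally contained. Consequently the proof should be brief, with the complement step in the $s\lambda$-closed case being its only substantive move.
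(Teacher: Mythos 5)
Your proof is correct and takes essentially the same approach as the paper's: both directions rest on the duality of Definition \ref{6}, replacing a $s\lambda$-closed set $A$ by its $s\lambda$-open complement $X-A$, which then contains exactly the other point of the pair. The only cosmetic difference is that the paper also routes the trivial direction (from $s\lambda T_0$ to the stated condition) through a complement, whereas you note that the $s\lambda$-open set supplied by Definition \ref{25}(1) already serves as the required set $A$.
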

\begin{proof}
Suppose $ x,y \in X, x\not=y$. Let $ A$ be a $ s\lambda $-open set,  $ x\in A, $ and $ y\not\in A $. Then by definition, $(X, \mu)$ is a $ s\lambda T_0 $ GTspace. Now let $ x\in A, y\not\in A $ but $ A $ is $ s\lambda $-closed. Then $ X-A $ is $ s\lambda $-open and $ y\in X-A, x\not\in X-A $. In this case also the GTspace $(X, \mu)$ is a $ s\lambda T_0 $.

Conversely, let the GTspace $(X, \mu)$ be  $ s\lambda T_0 $  and $ x,y \in X, x\not=y $. Then there is a $ s\lambda $-open set $ A $ such that $ x\in A, y\not\in A $ or there is a $ s\lambda $-open set $ B $ such that $ y\in B, x\not\in B $. So $ x\in X-B, y\not\in X-B $ when $ X-B $ is a $ s\lambda $-closed set. Hence the result follows.
\end{proof}

\begin{theorem}\label{27}
A GTspace $ (X,\mu) $ is $ s\lambda T_1$  if and only if every singleton of $ X $ is $ s\lambda $-closed.
\end{theorem}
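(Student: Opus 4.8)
The plan is to prove the equivalence in both directions, mirroring the standard argument for $T_1$ spaces but carried out entirely in the $s\lambda$-setting. The statement to prove is Theorem \ref{27}: a GTspace $(X,\mu)$ is $s\lambda T_1$ if and only if every singleton $\{x\}$ of $X$ is $s\lambda$-closed. I would first dispose of the easier converse direction, then handle the forward direction, which is where the only genuine content lies.

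For the converse, suppose every singleton is $s\lambda$-closed. Given distinct points $x,y\in X$, the sets $\{x\}$ and $\{y\}$ are both $s\lambda$-closed, so their complements $X-\{y\}$ and $X-\{x\}$ are $s\lambda$-open. I would then set $U=X-\{y\}$ and $V=X-\{x\}$ and observe directly that $x\in U$, $y\notin U$, $y\in V$, $x\notin V$. This is exactly the condition in Definition \ref{25}(2), so $(X,\mu)$ is $s\lambda T_1$. Here the only fact I invoke is that complements of $s\lambda$-closed sets are $s\lambda$-open, which is immediate from Definition \ref{6}.

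For the forward direction, assume $(X,\mu)$ is $s\lambda T_1$ and fix a point $x\in X$; I must show $\{x\}$ is $s\lambda$-closed, equivalently that $X-\{x\}$ is $s\lambda$-open. The natural route is to show that for each $y\neq x$ there is a $s\lambda$-open set $U_y$ with $y\in U_y$ and $x\notin U_y$, which the $s\lambda T_1$ hypothesis supplies at once. Then I would write $X-\{x\}=\bigcup_{y\neq x}U_y$: the inclusion $\supset$ holds because each $U_y$ avoids $x$, and $\subset$ holds because every $y\neq x$ lies in its own $U_y$. Finally, since $X-\{x\}$ is a union of $s\lambda$-open sets, Theorem \ref{21}(2) guarantees that $X-\{x\}$ is itself $s\lambda$-open, and therefore $\{x\}$ is $s\lambda$-closed, as required.

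The main obstacle, though a mild one, is ensuring that the union-of-open-sets step is legitimate in the generalized-topology framework: arbitrary unions of $s\lambda$-open sets need not be automatic in an abstract setting, but this is precisely the content of Theorem \ref{21}(2) proved earlier, so I can cite it rather than re-derive it. The only other point requiring a little care is the degenerate case where $X=\{x\}$ is a singleton; then $X-\{x\}=\emptyset$, which is $s\lambda$-open by convention, so $\{x\}=X$ is trivially $s\lambda$-closed and the argument goes through unchanged. With these observations the proof reduces to an essentially routine verification.
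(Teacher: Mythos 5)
Your proof is correct, and the converse direction (complements of the two singletons serve as the separating $s\lambda$-open sets) is exactly the paper's argument. In the forward direction, however, you take the dual route to the paper's: you cover $X-\{x\}$ by the $s\lambda$-open sets $U_y$ ($y\neq x$) supplied by the $s\lambda T_1$ hypothesis and invoke Theorem \ref{21}(2) to conclude that the union $X-\{x\}=\bigcup_{y\neq x}U_y$ is $s\lambda$-open; the paper instead argues through the closure operator, observing that no $y\neq x$ can be a $s\lambda$-adherence point of $\{x\}$, so that $\overline{s\{x\}_\lambda}=\{x\}$, and then appeals to the $\lambda$-analogue of Lemma \ref{7}(3) (``$A$ is closed iff $A$ equals its closure''). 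The two arguments are logically equivalent and both ultimately rest on Theorem \ref{21}, but your version makes that dependence explicit, which is the genuinely delicate point here: in a generalized topology one cannot take for granted that a derived class such as the $s\lambda$-open sets is stable under arbitrary unions, whereas the paper's closure argument hides this inside the remark (following Lemma \ref{9}) that Lemma \ref{7} transfers from $\mu$ to $\lambda$. Your explicit treatment of the degenerate case $X=\{x\}$ is also a small point of care the paper omits; the paper's route, in exchange, is shorter and reuses machinery already set up.
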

\begin{proof}
Let the GTspace $ (X,\mu) $ be $ s\lambda T_1$  and $ x\in X $. Take any point $ y\in X$ such that $ x\not=y $. Then there exists $ s\lambda $-open set $ V $ containing $ y $ such that $ x\not\in V $. So $ y $ can not be a $ s\lambda $-adherence point of $ \{x\} $. Therefore $ \overline{s\{x\}_\lambda}=\{x\} $ and hence $ \{x\} $ is $ s\lambda $-closed.

Conversely, let every singleton of $ X $ be $ s\lambda $-closed and let $ x,y\in X, x\not=y $. So $ X-\{x\} $ and $ X-\{y\}$ are $ s\lambda $-open sets such that $ y\in X-\{x\}, x\not\in X-\{x\} $ and $ x\in X-\{y\}, y\not\in X-\{y\} $. Hence the GT space $ (X,\mu) $ is $ s\lambda T_1$.
\end{proof}

\begin{definition}\label{28}
A set $ A $ of the GTspace $ (X,\mu) $  is said to be $ s\beta_\lambda $-closed if $ A=H\cap Q $ where $ H $ is a $ s\wedge_\lambda $-set and $ Q $ is a $ s\lambda $-closed set. $ A $ is $ s\beta_\lambda $-open if $ X-A $ is $ s\beta_\lambda $-closed. 
\end{definition}

\begin{lemma}\label{29}
A set $ A $ of $ (X,\mu) $ is $ s\beta_\lambda $-closed if and only if $ A=sA_\lambda^\wedge \cap \overline{sA_\lambda} $.
\end{lemma}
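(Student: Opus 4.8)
The plan is to prove Lemma~\ref{29} as a biconditional, treating the two directions separately and leaning on the structural characterisation of $s\lambda$-closed sets already recorded in Lemma~\ref{13}(1). Recall that the $s\beta_\lambda$-closed condition (Definition~\ref{28}) asks for a representation $A = H\cap Q$ with $H$ a $s\wedge_\lambda$-set and $Q$ a $s\lambda$-closed set. The claimed clean formula is $A = sA_\lambda^\wedge \cap \overline{sA_\lambda}$, so the whole exercise is to show that the \emph{canonical} choices $H = sA_\lambda^\wedge$ and $Q = \overline{sA_\lambda}$ already realise any such representation.

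The easy direction is ($\Leftarrow$). Suppose $A = sA_\lambda^\wedge \cap \overline{sA_\lambda}$. Here $sA_\lambda^\wedge$ is a $s\wedge_\lambda$-set by the $\lambda$-analogue of Lemma~\ref{9}(3) (valid since, as the remark after Lemma~\ref{9} notes, all the $\wedge/\vee$ properties transfer when $\mu$ is replaced by $\lambda$), and $\overline{sA_\lambda}$ is $s\lambda$-closed by the $\lambda$-analogue of Lemma~\ref{7}(2). Thus the given equality is itself a witness of the form $H\cap Q$ required by Definition~\ref{28}, so $A$ is $s\beta_\lambda$-closed. This step is essentially immediate.

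For ($\Rightarrow$), assume $A = H\cap Q$ with $H$ a $s\wedge_\lambda$-set and $Q$ $s\lambda$-closed; I would show $A = sA_\lambda^\wedge \cap \overline{sA_\lambda}$ by establishing both inclusions. The inclusion $A \subset sA_\lambda^\wedge \cap \overline{sA_\lambda}$ is general: $A \subset sA_\lambda^\wedge$ by the $\lambda$-analogue of Lemma~\ref{9}(2), and $A \subset \overline{sA_\lambda}$ by the $\lambda$-analogue of Lemma~\ref{7}(4), so no hypothesis is even needed. The substantive inclusion is the reverse one, and it is where monotonicity of $\wedge$ and of closure does the work. Since $A \subset H$ and $H$ is a $s\wedge_\lambda$-set, applying the $\lambda$-analogue of Lemma~\ref{9}(4) and then Lemma~\ref{9}(3) gives $sA_\lambda^\wedge \subset sH_\lambda^\wedge = H$. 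Similarly, since $A \subset Q$ and $Q$ is $s\lambda$-closed, the $\lambda$-analogues of Lemma~\ref{7}(5) and Lemma~\ref{7}(3) give $\overline{sA_\lambda} \subset \overline{sQ_\lambda} = Q$. Intersecting these two containments yields $sA_\lambda^\wedge \cap \overline{sA_\lambda} \subset H \cap Q = A$, completing the reverse inclusion and hence the equality.

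The only real subtlety, and the step I would flag as the main obstacle, is justifying the free use of the $\lambda$-versions of Lemmas~\ref{7} and~\ref{9}: strictly these lemmas are stated for $s\mu$-closure and the $s\wedge_\mu$-operator, and the proof silently substitutes $\lambda$ for $\mu$. The excerpt does license this transfer (the sentence ``if $\mu$ is replaced by $\lambda$\ldots the similar properties\ldots can also be proved''), so I would simply cite that remark once at the outset to legitimise every $\lambda$-indexed appeal, rather than reproving each property. With that convention fixed, the argument is a short two-inclusion computation and no genuinely hard estimate arises.
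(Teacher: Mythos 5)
Your proof is correct and follows essentially the same route as the paper's: both directions rest on the chain $A \subset sA_\lambda^\wedge \cap \overline{sA_\lambda} \subset sH_\lambda^\wedge \cap \overline{sQ_\lambda} = H\cap Q = A$, using monotonicity of the $\wedge$-operator and of closure together with the fixed-point identities $sH_\lambda^\wedge = H$ and $\overline{sQ_\lambda} = Q$. Your explicit citation of the remark licensing the $\mu\to\lambda$ transfer of Lemmas~\ref{7} and~\ref{9} is a small point of added care that the paper leaves implicit.
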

\begin{proof}
Let $ A $ be a $ s\beta_\lambda $-closed set, so $ A=H\cap Q $ where $ H $ is a $ s\wedge_\lambda $-set and $ Q $ is a $ s\lambda $-closed set. Now $ A\subset H $ implies $ sA_\lambda^\wedge\subset sH_\lambda^\wedge $ and $ A\subset Q $ implies $ \overline{ sA_\lambda}\subset \overline{ sQ_\lambda} $. So $ A\subset sA_\lambda^\wedge\cap\overline{sA_\lambda}\subset sH_\lambda^\wedge\cap\overline{sQ_\lambda}=H\cap Q= A $ which implies that $ A=sA_\lambda^\wedge \cap \overline{sA_\lambda} $.

Converse part is obvious.
\end{proof}

\begin{remark}\label{29A}
Clearly in $ (X,\mu),  s\lambda $-closed set is both $ sg_\lambda $-closed and $ s\beta_\lambda $-closed but either of two may not imply $ s\lambda $-closed. Here is given necessary and sufficient condition for that.
\end{remark}

\begin{theorem}\label{30}
A set $ A \subset X$ is $ s\lambda $-closed if and only if $ A $ is both $ sg_\lambda $-closed and $ s\beta_\lambda $-closed.
\end{theorem}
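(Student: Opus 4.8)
The plan is to prove each implication by translating both hypotheses into the closure/wedge language already supplied by Note \ref{11} and Lemma \ref{29}. Recall that $A$ is $sg_\lambda$-closed exactly when $\overline{sA_\lambda} \subset sA_\lambda^\wedge$ (Note \ref{11}), and that $A$ is $s\beta_\lambda$-closed exactly when $A = sA_\lambda^\wedge \cap \overline{sA_\lambda}$ (Lemma \ref{29}). I shall also use the $\lambda$-analogue of Lemma \ref{7}(3), namely that $A$ is $s\lambda$-closed if and only if $A = \overline{sA_\lambda}$, whose validity is recorded in the remark following Lemma \ref{9}.

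For the forward implication, suppose $A$ is $s\lambda$-closed. Then $A$ is $sg_\lambda$-closed immediately by Theorem \ref{18}(1). For the $s\beta_\lambda$-part, the $\lambda$-analogue of Lemma \ref{7}(3) gives $\overline{sA_\lambda} = A$; since $A \subset sA_\lambda^\wedge$ always holds (the $\lambda$-analogue of Lemma \ref{9}(2)), we obtain $sA_\lambda^\wedge \cap \overline{sA_\lambda} = sA_\lambda^\wedge \cap A = A$, so $A$ is $s\beta_\lambda$-closed by Lemma \ref{29}. This half merely makes Remark \ref{29A} precise.

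For the converse, assume $A$ is simultaneously $sg_\lambda$-closed and $s\beta_\lambda$-closed. From Note \ref{11} we have $\overline{sA_\lambda} \subset sA_\lambda^\wedge$, which forces the collapse $sA_\lambda^\wedge \cap \overline{sA_\lambda} = \overline{sA_\lambda}$. Feeding this into the $s\beta_\lambda$-identity $A = sA_\lambda^\wedge \cap \overline{sA_\lambda}$ of Lemma \ref{29} yields $A = \overline{sA_\lambda}$, and the $\lambda$-analogue of Lemma \ref{7}(3) then certifies that $A$ is $s\lambda$-closed.

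I expect no serious obstacle here: once the two characterizations are in place the argument is a short set-theoretic manipulation, the only point requiring care being the legitimacy of the $\lambda$-versions of Lemmas \ref{7} and \ref{9}, which the paper has already licensed in the remark after Lemma \ref{9}. The genuinely informative half is the converse, where the $sg_\lambda$-hypothesis $\overline{sA_\lambda} \subset sA_\lambda^\wedge$ is precisely what renders the $s\wedge_\lambda$-factor redundant in the $s\beta_\lambda$-decomposition and thereby reduces $A$ to its own $s\lambda$-closure.
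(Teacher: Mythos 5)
Your proof is correct and follows essentially the same route as the paper: the nontrivial direction is exactly the paper's argument (Note \ref{11} gives $\overline{sA_\lambda}\subset sA_\lambda^\wedge$, which collapses the Lemma \ref{29} identity $A=sA_\lambda^\wedge\cap\overline{sA_\lambda}$ to $A=\overline{sA_\lambda}$), and the forward direction, which the paper dismisses as obvious via Remark \ref{29A}, you simply spell out with Theorem \ref{18}(1) and Lemma \ref{29}. No gaps; your explicit appeal to the $\lambda$-analogues of Lemmas \ref{7} and \ref{9} is exactly the license the paper grants in the remark following Lemma \ref{9}.
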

\begin{proof}
Suppose $A$ is $ sg_\lambda$-closed  and $ s\beta_\lambda$-closed. Since $ A $ is $s g_\lambda $-closed,  then $ \overline{sA_\lambda}\subset  sA_\lambda^\wedge $.   Again since $A$ is $s \beta_\lambda$-closed,  $A=sA_\lambda^\wedge \cap\overline{sA_\lambda}=\overline{sA_\lambda}$. Hence $ A $ is $ s\lambda $-closed. 

Converse part is obvious.
\end{proof}

\begin{theorem}\label{31}
A GTspace $ (X,  \mu) $ is $s\lambda T_\frac{1}{2} $ if and only if  every subset of $(X,\mu)$ is $s\beta_\lambda$-closed.
\end{theorem}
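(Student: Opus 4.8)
The plan is to prove the two implications separately, leaning on Theorem~\ref{30} (which characterises $ s\lambda $-closedness as the conjunction of $ sg_\lambda $-closedness and $ s\beta_\lambda $-closedness) for the easy direction, and on Theorem~\ref{23} together with Lemma~\ref{29} for the harder direction.

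For the implication ``every subset is $ s\beta_\lambda $-closed $ \Rightarrow (X,\mu) $ is $ s\lambda T_\frac{1}{2} $'', I would argue directly from Definition~\ref{22}. Take any $ sg_\lambda $-closed set $ A $. By the standing hypothesis $ A $ is also $ s\beta_\lambda $-closed, so Theorem~\ref{30} immediately yields that $ A $ is $ s\lambda $-closed. Since $ A $ was an arbitrary $ sg_\lambda $-closed set, every such set is $ s\lambda $-closed, which is precisely the defining condition for $ s\lambda T_\frac{1}{2} $. This direction is essentially a one-line consequence of Theorem~\ref{30} and requires no further computation.

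For the converse, I would assume $ (X,\mu) $ is $ s\lambda T_\frac{1}{2} $ and fix an arbitrary $ A\subset X $, aiming to show $ A $ is $ s\beta_\lambda $-closed. By Lemma~\ref{29} this is equivalent to the set equality $ A = sA_\lambda^\wedge\cap\overline{sA_\lambda} $. The inclusion $ A\subset sA_\lambda^\wedge\cap\overline{sA_\lambda} $ holds automatically (from the $ \lambda $-analogues of Lemma~\ref{9}(2) and Lemma~\ref{7}(4)), so the entire content lies in the reverse inclusion $ sA_\lambda^\wedge\cap\overline{sA_\lambda}\subset A $. I would establish this by contradiction: suppose some $ x\in sA_\lambda^\wedge\cap\overline{sA_\lambda} $ satisfies $ x\notin A $. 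Invoking Theorem~\ref{23}, the hypothesis $ s\lambda T_\frac{1}{2} $ forces $ \{x\} $ to be either $ s\lambda $-open or $ s\lambda $-closed, which splits the argument into two cases.

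The case analysis is where the real work sits, and it parallels the proof of (2)$ \Rightarrow $(3) in Theorem~\ref{23}. If $ \{x\} $ is $ s\lambda $-open, then since $ x\in\overline{sA_\lambda} $ the $ s\lambda $-open set $ \{x\} $ must meet $ A $ (by the $ \lambda $-version of the adherence characterisation recorded just after Lemma~\ref{7}), forcing $ x\in A $, a contradiction. If instead $ \{x\} $ is $ s\lambda $-closed, then $ X-\{x\} $ is $ s\lambda $-open and, because $ x\notin A $, it contains $ A $; hence $ sA_\lambda^\wedge\subset X-\{x\} $, giving $ x\notin sA_\lambda^\wedge $, again a contradiction. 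As both cases are impossible, the reverse inclusion holds, so $ A = sA_\lambda^\wedge\cap\overline{sA_\lambda} $ and Lemma~\ref{29} completes the proof. The only delicate point to keep in mind is that every $ \wedge $-, closure- and adherence-fact must be used in its $ \lambda $-version rather than the $ \mu $-version, which is legitimate by the remark following Lemma~\ref{9}.
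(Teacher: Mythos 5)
Your proof is correct, and both halves take a genuinely different route from the paper's. For the implication ``every subset $s\beta_\lambda$-closed $\Rightarrow$ $s\lambda T_\frac{1}{2}$'' you invoke Theorem \ref{30}: an $sg_\lambda$-closed set is, under the hypothesis, also $s\beta_\lambda$-closed, hence $s\lambda$-closed, which is Definition \ref{22} verbatim. The paper never uses Theorem \ref{30} here; it argues via singletons, applying Lemma \ref{29} to $X-\{x\}$ when $\{x\}$ is not $s\lambda$-open (so that $\overline{s(X-\{x\})_\lambda}=X$ and the lemma forces $X-\{x\}$ to be a $s\wedge_\lambda$-set, hence $s\lambda$-open), and then appeals to Theorem \ref{23} (2)$\Rightarrow$(1). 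Your route is shorter and is the natural payoff of Remark \ref{29A}. For the converse, you and the paper both start from Theorem \ref{23} (1)$\Rightarrow$(2) to get the dichotomy on singletons, but then diverge: the paper is constructive, exhibiting $A=G\cap H$ where $G$ is the intersection of the sets $X-\{x\}$ over $x\in X-A$ with $\{x\}$ $s\lambda$-closed (a $s\wedge_\lambda$-set, via the $\lambda$-analogue of Lemma \ref{20}) and $H$ is the intersection over those $x$ with $\{x\}$ $s\lambda$-open (a $s\lambda$-closed set, via Theorem \ref{21}), so that Definition \ref{28} applies directly; you instead verify the equality $A=sA_\lambda^\wedge\cap\overline{sA_\lambda}$ of Lemma \ref{29} pointwise, ruling out a point $x\in sA_\lambda^\wedge\cap\overline{sA_\lambda}$ with $x\notin A$ by the two-case contradiction. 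The arguments are dual presentations of one idea: your two contradictions are exactly the reasons the paper's $G\cap H$ contains no point outside $A$. The paper's version buys an explicit decomposition witnessing $s\beta_\lambda$-closedness; yours buys economy, reusing already-proved results (the easy direction of Lemma \ref{29}, and Theorem \ref{30}) instead of rebuilding the decomposition by hand. Your closing caveat---that every closure, adherence and $\wedge$ fact must be taken in its $\lambda$-version, which is legitimate because the $s\lambda$-open sets form a generalized topology (Theorem \ref{21}) and by the remark following Lemma \ref{9}---is precisely the one point where care is needed, and you have it right.
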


\begin{proof} 
 Suppose every subset of  $(X,\mu)$ is $s\beta_\lambda$-closed and $ x\in X, \{x\}$ is not $s\lambda $-open.  Then $X-\{x\}$ is not $ s\lambda $-closed but $ \overline{s(X-\{x\})_\lambda}=X $. By assumption, $X-\{x\}$ is a $s\beta_\lambda$-closed set, then by lemma \ref{29}, $X-\{x\} = s(X-\{x\})_\lambda^\wedge\cap \overline{s(X-\{x\})_\lambda}$ = $s(X-\{x\})_\lambda^\wedge\cap X = s(X- \{x\})_\lambda^\wedge$. Therefore $X-\{x\}$ is a $s\wedge_\lambda $-set. So $X-\{x\}$ is a $ s\lambda $-open set  which implies that $\{x\}$ is a $s\lambda$-closed set. Then by theorem \ref{23} (2), $(X,\mu)$ is $s\lambda T_\frac{1}{2}$ GTspace.
 
 Conversely, suppose that $ (X, \mu) $ is $s\lambda T_\frac{1}{2}$ GTspace and $ A\subset X $.  Then by theorem \ref{23} (2), every singleton is either $ s\lambda $-open or $ s\lambda $-closed.  Put $ G=\bigcap \{X - \{x\} : x\in X-A, \{x\}$ is $ s\lambda $-closed\} and   $H=\bigcap\{X - \{x\} : x\in X-A, \{x\}$ is $ s\lambda $-open\}. 
 Therefore, $ G $ is a $s \wedge_\lambda $-set and $ H $ is a $ s\lambda $-closed set and $ A=G\cap H $. Hence $ A $ is a $s\beta_\lambda$-closed set.
 \end{proof}

\begin{definition}\label{32}(c.f.\cite{MS}). Suppose $ (X,  \mu) $ is a GTspace then it is called a

(1)   $s\lambda T_\frac{1}{4}$ GTspace if for every finite subset $E$ of $X$ and for every $ y\in X - E $,  there exists a set $G_y$ containing $E$ and $G_y\cap \{y\}=\emptyset$  such that $G_y$ is either $ s\lambda $-open or $ s\lambda $-closed.
 
(2)  $s\lambda T_\frac{3}{8}$ GTspace if for every countable subset $E$ of $X$ and for every $ y\in X - E$,  there exists  a set $ G_y $ containing $E$ and $G_y\cap \{y\}=\emptyset$ such that $ G_y $ is either $ s\lambda $-open or $ s\lambda $-closed.
\end{definition}

Clearly if we take $ E=\{x\} $ in the definition \ref{32} (1), then in view of theorem \ref{26} we see that every $s\lambda T_\frac{1}{4}$ GTspace is a $s\lambda T_0$.

\begin{theorem}\label{33} Let $ (X, \mu) $ be a GTspace, then the following results hold.

(1)  $ (X,\mu) $ is $s\lambda T_0 $ if and only if every singleton of $X$ is $s\beta_ \lambda$-closed.

(2) $ (X,\mu) $ is $ s\lambda  T_\frac{1}{4}$ if and only if every finite subset of $X$ is $s \beta_\lambda$-closed.

(3) $ (X,\mu) $ is $s\lambda T_\frac{3}{8}$ if and only if every countable subset of $X$ is $ s\beta_\lambda$-closed.
\end{theorem}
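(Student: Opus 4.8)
The plan is to treat the three parts uniformly, since they differ only in the size of the subsets involved (singletons for $s\lambda T_0$, finite sets for $s\lambda T_\frac{1}{4}$, countable sets for $s\lambda T_\frac{3}{8}$). The common core is a translation between the $s\beta_\lambda$-closedness of a set $E$ and a point-separation statement. By Lemma \ref{29}, $E$ is $s\beta_\lambda$-closed exactly when $E = sE_\lambda^\wedge \cap \overline{sE_\lambda}$. Since $E \subseteq sE_\lambda^\wedge$ and $E \subseteq \overline{sE_\lambda}$ always hold, this reduces to the single inclusion $sE_\lambda^\wedge \cap \overline{sE_\lambda} \subseteq E$, i.e.\ to the requirement that for every $y \in X - E$ one has $y \notin sE_\lambda^\wedge$ or $y \notin \overline{sE_\lambda}$.

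The key step I would isolate is the following equivalence, valid for \emph{any} $E \subseteq X$ and any $y \in X - E$: one has $y \notin sE_\lambda^\wedge \cap \overline{sE_\lambda}$ if and only if there exists a set $G_y$ with $E \subseteq G_y$, $y \notin G_y$, and $G_y$ either $s\lambda$-open or $s\lambda$-closed. For the forward direction, if $y \notin sE_\lambda^\wedge$ then, as $sE_\lambda^\wedge$ is the intersection of all $s\lambda$-open supersets of $E$, some $s\lambda$-open $U \supseteq E$ misses $y$, and I take $G_y = U$; if instead $y \notin \overline{sE_\lambda}$, then $G_y = \overline{sE_\lambda}$ works, being $s\lambda$-closed, containing $E$, and avoiding $y$. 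Conversely, an $s\lambda$-open such $G_y$ forces $sE_\lambda^\wedge \subseteq G_y$ hence $y \notin sE_\lambda^\wedge$, while an $s\lambda$-closed $G_y$ forces $\overline{sE_\lambda} \subseteq G_y$ hence $y \notin \overline{sE_\lambda}$.

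Granting this equivalence, each part follows by matching the cardinality condition in the relevant separation axiom. For part (2): $(X,\mu)$ is $s\lambda T_\frac{1}{4}$ precisely when, for every finite $E$ and every $y \in X - E$, a set $G_y$ as above exists, which by the equivalence and the reduction above says exactly that every finite $E$ is $s\beta_\lambda$-closed. Part (3) is identical with ``finite'' replaced by ``countable''. For part (1), I would feed the singleton case through Theorem \ref{26}: $\{x\}$ is $s\beta_\lambda$-closed iff for every $y \neq x$ there is a set containing $x$ but not $y$ that is $s\lambda$-open or $s\lambda$-closed, which is precisely the characterization of $s\lambda T_0$ given there.

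The one point demanding care, and the place I expect small friction, is the bookkeeping in the $s\lambda T_0$ case: Definition \ref{25}(1) only asserts a $s\lambda$-open set containing exactly one of $x,y$, so when that set happens to contain $y$ rather than $x$ I must pass to its complement to produce the $s\lambda$-closed witness separating in the direction needed for $\{x\}$. Theorem \ref{26} already packages this symmetry, so leaning on it keeps the argument clean. Beyond that, the proof is routine once the open/$sE_\lambda^\wedge$ versus closed/$\overline{sE_\lambda}$ correspondence is fixed.
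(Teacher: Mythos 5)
Your proof is correct, and it reaches the same underlying correspondence as the paper (open witnesses $\leftrightarrow$ the $\wedge$-component, closed witnesses $\leftrightarrow$ the closure component), but it is organized along a genuinely different route. The paper proves only part (2) explicitly, declaring the other parts similar, and works straight from Definition \ref{28}: in the forward direction it \emph{constructs} a decomposition $E=K\cap P$ by taking $K$ to be the intersection of all the $s\lambda$-open witnesses $G_y$ and $P$ the intersection of all the $s\lambda$-closed ones (so that $K$ is a $s\wedge_\lambda$-set and $P$ is $s\lambda$-closed, implicitly via the $\lambda$-analogue of Lemma \ref{20} and Theorem \ref{21}); in the converse it unpacks an \emph{arbitrary} decomposition $E=K\cap P$ and splits into the cases $y\notin P$ and $y\in P$ (whence $y\notin K=sK_\lambda^\wedge$) to extract a witness. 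You instead run everything through Lemma \ref{29}, reducing $s\beta_\lambda$-closedness to the pointwise inclusion $sE_\lambda^\wedge\cap\overline{sE_\lambda}\subset E$ and proving one witness-versus-point equivalence valid for arbitrary $E\subset X$, after which the three parts are pure cardinality bookkeeping. Your version buys uniformity (one lemma literally is all three proofs, rather than ``the others are similar''), avoids any appeal to the intersection lemmas, and --- the most substantive gain --- it explicitly handles the directionality issue in part (1): Definition \ref{25}(1) does not say \emph{which} of the two points the $s\lambda$-open set contains, and your complement trick (packaged via Theorem \ref{26}) is exactly the step the paper's ``similar'' proof of part (1) silently needs. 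What the paper's version buys in exchange is a forward direction that exhibits a concrete $s\wedge_\lambda$-set/$s\lambda$-closed decomposition directly from the separation axiom, without invoking Lemma \ref{29} at all.
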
 

\begin{proof} 
We prove the result (2) only; proofs of other are are similar to that of (2) and so is omitted. It can be said from theorems \ref{31} and \ref{33} (1) that $ s\lambda T_\frac{1}{2}$ GTspace implies $ s\lambda T_0$.

Suppose $ (X,  \mu) $ is $s\lambda T_\frac{1}{4}$ GTspace  and $E$ is a finite subset of $X$.  So for every $ y\in X - E $ there is a set $ G_y $ containing $E$ and disjoint from $ \{y\} $ such that $ G_y $ is either $ s\lambda $-open or $ s\lambda $-closed.  Let $K$ be the intersection of all such $ s\lambda $-open sets $ G_y $ and $P$ be the intersection of all such  $ s\lambda $-closed sets  $G_y$ as $ y $ runs over $ X -E $.  Then  $K$ is a $ s\wedge_\lambda $-set  and $ P $ is a $ s\lambda$-closed set and $ E=K\cap P $. Thus $ E $ is $s \beta_\lambda$-closed.

Conversely,  let $E$ be a finite set of $ X $ such that  $ E $ is $s \beta_\lambda$-closed.  Let  $ y\in X - E $. Then $ E=K\cap P $ where $K$ is a $s \wedge_\lambda $-set and $ P $ ia a $ s\lambda $-closed set. If $ y\not\in P $ then the case is obvious since $ P=\overline{sP_\lambda} $. If $ y\in P $, then $ y\not\in K $. Then there exists some $ s\lambda $-open set $ U $ containing $E$ such that $ y\not\in U $.  Hence $ (X,  \mu) $ is $s\lambda T_\frac{1}{4}$ GTspace.
\end{proof}

\begin{remark}\label{34}
It follows from theorems \ref{31}, \ref{33} (3),   \ref{33} (2) that every $s\lambda T_\frac{1}{2}$  GTspace implies $s\lambda T_\frac{3}{8}$ and every $s\lambda T_\frac{3}{8}$ GTspace implies  $s\lambda  T_\frac{1}{4}$. 
\end{remark}

\begin{theorem}\label{35}
If the GTspace $ (X, \mu )$ is  $s\lambda T_0 $  then for every pair of distinct points $ p,q\in X $,  either $ p\not \in\overline{s\{q\}_\lambda} $  or $ q\not \in \overline
{s\{p\}_\lambda} $.
\end{theorem}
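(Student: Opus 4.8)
The plan is to reduce the statement to the separating set furnished by Theorem~\ref{26} and then read off the conclusion from the characterization of $s\lambda$-closure in terms of $s\lambda$-open sets. Recall that, as in the remark following Lemma~\ref{7} applied with $\lambda$ in place of $\mu$, a point $x$ lies in $\overline{s\{y\}_\lambda}$ if and only if every $s\lambda$-open set containing $x$ meets $\{y\}$, that is, contains $y$. Equivalently, $x\notin\overline{s\{y\}_\lambda}$ precisely when some $s\lambda$-open set contains $x$ but not $y$.

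First I would invoke Theorem~\ref{26}. Since $p\ne q$ and $(X,\mu)$ is $s\lambda T_0$, there is a set $A$ containing exactly one of $p,q$ such that $A$ is either $s\lambda$-open or $s\lambda$-closed. To treat both possibilities uniformly, I would pass to an $s\lambda$-open witness: if $A$ is $s\lambda$-open take $U=A$, and if $A$ is $s\lambda$-closed take $U=X-A$, which is $s\lambda$-open by Definition~\ref{6}. In either case $U$ is an $s\lambda$-open set containing exactly one of $p,q$.

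Finally I would split into two subcases according to which point $U$ contains. If $p\in U$ and $q\notin U$, then $U$ is $s\lambda$-open, contains $p$, and misses $q$, so by the closure characterization $p\notin\overline{s\{q\}_\lambda}$, giving the first alternative. If $q\in U$ and $p\notin U$, the symmetric argument yields $q\notin\overline{s\{p\}_\lambda}$, the second alternative. Thus the required disjunction always holds.

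There is no substantive obstacle here; the only care needed is the bookkeeping in the reduction step, namely checking that $X-A$ still separates $p$ and $q$ (it contains precisely the one of $p,q$ not in $A$), so that after reduction $U$ genuinely contains exactly one of the two points. Once this normal form is in place, the conclusion follows immediately from the characterization of membership in $\overline{s\{\cdot\}_\lambda}$ via $s\lambda$-open sets.
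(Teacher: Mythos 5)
Your proof is correct and follows essentially the same argument as the paper: produce an $s\lambda$-open set containing exactly one of $p,q$, then conclude from the adherence-point characterization of $\overline{s\{\cdot\}_\lambda}$. The only difference is that your detour through Theorem~\ref{26} (and the subsequent reduction of an $s\lambda$-closed witness to an $s\lambda$-open one by complementation) is unnecessary, since Definition~\ref{25}~(1) already hands you an $s\lambda$-open set containing exactly one of the two points, which is what the paper uses directly.
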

\begin{proof}
Let the GTspace $ (X, \mu) $ be  $s\lambda T_0 $  and $ p, q\in X, p \not= q $. Then there exists a $ s\lambda $-open set $ U $ which contains only one of $ p, q $. Suppose that $ p\in U $ and $ q \not \in U $. Then the $ s\lambda $-open set $ U $ has an empty intersection with $ \{q\} $. Hence $  p\not\in\overline{s\{q\}_\lambda}$. Similarly if $ U $ contains the point $ q $  but not $ p $ then $ q\not\in \overline{s\{p\}_\lambda} $. 
\end{proof}

\begin{definition}\label{36}
A GTspace $ (X, \mu) $ is said to be $ s\lambda $-symmetric if $ x,  y \in X,  x \in \overline{s\{y\}_\lambda} \Rightarrow y\in \overline{s\{x\}_\lambda} $. 
\end{definition}

\begin{theorem}\label{37}
A GTspace $ (X,\mu) $ is $ s\lambda $-symmetric iff $ \{x\} $ is $ sg_\lambda $-closed for each $ x\in X $.
\end{theorem}
\begin{proof}
Suppose each singleton of $ X $ is $ sg_\lambda $-closed  and  for $ x,  y \in X $,  let  $x \in \overline{s\{y\}_\lambda} $ but $ y\not\in \overline{s\{x\}_\lambda} $. Then  $\{y\}\subset  X-\overline{s\{x\}_\lambda} $, a $ s\lambda $-open set. Since $ \{y\} $ is  $ sg_\lambda $-closed we get $ \overline{s\{y\}_\lambda} \subset X-\overline{s\{x\}_\lambda} $ and so $ x\in X-\overline{s\{x\}_\lambda} $, a contradiction. Hence $ y\in  \overline{s\{x\}_\lambda} $.

Conversely, suppose $ x\in X,  \{x\}\subset U $, a $ s\lambda $-open set, but $ \overline{s\{x\}_\lambda}\not \subset U $. This implies that $ \overline{s\{x\}_\lambda}\cap (X-U)\not = \emptyset $. Take $ y\in  \overline{s\{x\}_\lambda}\cap (X-U) $. For  $ s\lambda $-symmetryness, $ x\in \overline{s\{y\}_\lambda}$. As $ y\in X-U $ so  $ \overline{s\{y\}_\lambda}\subset X-U$ and $ x\not\in U $. This is a contradiction. Hence the result.  
\end{proof}
 
\begin{theorem}\label{38}
A GTspace $ (X,\mu) $ is $ s\lambda T_1 $ if and only if it is $ s\lambda $-symmetric  and $ s\lambda T_0 $.
\end{theorem}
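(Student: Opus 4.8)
The plan is to prove both implications by reducing everything to the characterization of $s\lambda T_1$ in terms of singletons, namely Theorem \ref{27}, which says that $(X,\mu)$ is $s\lambda T_1$ if and only if every singleton of $X$ is $s\lambda$-closed. The two external ingredients I will lean on are Theorem \ref{35} (the $s\lambda T_0$ consequence that for distinct $p,q$ at least one of $p\notin\overline{s\{q\}_\lambda}$, $q\notin\overline{s\{p\}_\lambda}$ holds) and the definition of $s\lambda$-symmetry.

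For the forward direction, assume $(X,\mu)$ is $s\lambda T_1$. That it is $s\lambda T_0$ is immediate from Definition \ref{25}, since the defining condition for $s\lambda T_1$ produces in particular a single $s\lambda$-open set separating one point from the other. For symmetry, I would invoke Theorem \ref{27} to get that every singleton is $s\lambda$-closed, so $\overline{s\{y\}_\lambda}=\{y\}$ for each $y$. Then $x\in\overline{s\{y\}_\lambda}$ forces $x=y$, whence $y\in\{x\}=\overline{s\{x\}_\lambda}$, and the symmetry condition of Definition \ref{36} holds vacuously. (Equivalently one could route this through Theorem \ref{37}, since an $s\lambda$-closed singleton is $sg_\lambda$-closed.)

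For the converse, assume $(X,\mu)$ is both $s\lambda$-symmetric and $s\lambda T_0$. By Theorem \ref{27} it suffices to show each singleton $\{x\}$ is $s\lambda$-closed, i.e.\ $\overline{s\{x\}_\lambda}=\{x\}$. Suppose to the contrary that some $y\in\overline{s\{x\}_\lambda}$ with $y\neq x$. By $s\lambda$-symmetry (Definition \ref{36}), $y\in\overline{s\{x\}_\lambda}$ yields $x\in\overline{s\{y\}_\lambda}$, so both $y\in\overline{s\{x\}_\lambda}$ and $x\in\overline{s\{y\}_\lambda}$ hold simultaneously. But $x\neq y$ and $s\lambda T_0$-ness, via Theorem \ref{35}, force at least one of $x\notin\overline{s\{y\}_\lambda}$ or $y\notin\overline{s\{x\}_\lambda}$, a contradiction. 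Hence $\overline{s\{x\}_\lambda}=\{x\}$ for every $x$, and Theorem \ref{27} gives that $(X,\mu)$ is $s\lambda T_1$.

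The whole argument is short because the heavy lifting has already been done in Theorems \ref{27} and \ref{35}; the only genuinely new step is the backward direction, where the main point is recognizing that $s\lambda$-symmetry converts the one-sided separation guaranteed by $s\lambda T_0$ (via Theorem \ref{35}) into a contradiction, thereby collapsing each $s\lambda$-closure of a singleton to the singleton itself. I expect no real obstacle beyond making sure the chain $y\in\overline{s\{x\}_\lambda}\Rightarrow x\in\overline{s\{y\}_\lambda}$ is applied in the correct direction so that it clashes head-on with the disjunction supplied by Theorem \ref{35}.
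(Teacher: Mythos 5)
Your proof is correct and takes essentially the same route as the paper: both directions rest on Theorem \ref{27}, Theorem \ref{35}, Theorem \ref{37}, and Definition \ref{36}, with symmetry turning the one-sided $s\lambda T_0$ disjunction into closedness of every singleton. The only cosmetic difference is that in the converse you finish by citing Theorem \ref{27}, while the paper re-derives that step by hand, extracting from $x\notin\overline{s\{y\}_\lambda}$ and $y\notin\overline{s\{x\}_\lambda}$ two explicit separating $s\lambda$-open sets; the logical content is identical.
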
 
\begin{proof}
Let $ (X,\mu) $ be $ s\lambda T_1 $ GTspace. Then obviously it is $ s\lambda T_0 $. Since $ (X,\mu) $ is $ s\lambda T_1 $, by theorem \ref{27}, we get every singleton is $ s\lambda $-closed and so $ sg_\lambda $-closed which by theorem \ref{37}, the GTspace$ (X,\mu) $  is $ s\lambda $-symmetric.

Conversely, Let $ (X, \mu)$ be $ s\lambda $-symmetric and $s\lambda T_0 $ GTspace and let $ x, y \in X,  x\not= y$. Since  $ (X,  \mu)$ is $s\lambda T_0 $, by theorem \ref{35} either $ x\not\in\overline{s\{y\}_\lambda} $ or $ y\not\in\overline{s\{x\}_\lambda} $. Let $ x\not\in\overline{s\{y\}_\lambda} $. Then  $ y\not\in\overline{s\{x\}_\lambda} $.  For if $ y\in \overline{s\{x\}_\lambda} $  then it would imply $ x\in\overline{s\{y\}_\lambda} $, since $ (X, \mu) $ is $ s\lambda $-symmetric. Again since $ x\not\in \overline{s\{y\}_\lambda}$, there is a $ s\lambda $-closed set $ F $ such that $ y\in F $ and $ x\not\in F $. So $ x\in X-F $, a $ s\lambda $-open set and $ y\not\in X-F $. Also since $ y\not\in\overline{s\{x\}_\lambda} $, there is a $ s\lambda $-closed set $ P $ such that $ x\in P $ and $ y\not\in P $.  So $ y\in X-P $, a $ s\lambda $-open set and $ x\not\in X-P $.  Hence $( X,  \mu) $ is $s\lambda T_1 $.  
\end{proof}

\begin{theorem}\label{39}
If the GTspace $ (X, \mu) $ is $ s\lambda $-symmetric   then the GTspaces 
$s\lambda T_0 $,    
$s\lambda T_1 $,   
$s\lambda T_\frac{1}{2} $, 
$ s\lambda T_\frac{3}{8} $,   
$s\lambda T_\frac{1}{4} $ are all equivalent. 
\end{theorem}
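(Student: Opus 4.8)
The plan is to arrange the five axioms into one cycle of implications, four links of which hold in an arbitrary GTspace, and to invoke $s\lambda$-symmetry only for the remaining link. Explicitly, I would establish
\[
s\lambda T_1 \Rightarrow s\lambda T_\frac{1}{2} \Rightarrow s\lambda T_\frac{3}{8} \Rightarrow s\lambda T_\frac{1}{4} \Rightarrow s\lambda T_0 \Rightarrow s\lambda T_1,
\]
and the mutual equivalence of all five is then immediate.

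First I would dispose of the four symmetry-free implications. By Theorem \ref{27}, $s\lambda T_1$ says that every singleton is $s\lambda$-closed, and such a singleton is in particular ``either $s\lambda$-open or $s\lambda$-closed''; hence Theorem \ref{23} (2) yields $s\lambda T_1 \Rightarrow s\lambda T_\frac{1}{2}$. The two middle links $s\lambda T_\frac{1}{2} \Rightarrow s\lambda T_\frac{3}{8}$ and $s\lambda T_\frac{3}{8} \Rightarrow s\lambda T_\frac{1}{4}$ are exactly Remark \ref{34}, while $s\lambda T_\frac{1}{4} \Rightarrow s\lambda T_0$ is the observation recorded right after Definition \ref{32}. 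None of these four uses the hypothesis of $s\lambda$-symmetry.

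The hypothesis enters only in the closing arrow $s\lambda T_0 \Rightarrow s\lambda T_1$, where I would apply Theorem \ref{38} verbatim: a GTspace is $s\lambda T_1$ precisely when it is both $s\lambda$-symmetric and $s\lambda T_0$. Since $(X,\mu)$ is $s\lambda$-symmetric by assumption, any of the four stronger separation properties pushes us down to $s\lambda T_0$ through the chain above, and Theorem \ref{38} then lifts us back up to $s\lambda T_1$; the cycle closes, so the five axioms coincide.

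I do not anticipate a genuine obstacle, as every link of the cycle is already a proved result and the work is essentially bookkeeping: one must make sure the symmetry hypothesis is attached to exactly the $s\lambda T_0 \Rightarrow s\lambda T_1$ step, which is the only implication that truly needs it. The single point worth a second glance is the opening implication $s\lambda T_1 \Rightarrow s\lambda T_\frac{1}{2}$, since it is read off by juxtaposing the singleton characterizations of Theorems \ref{27} and \ref{23} (2) rather than being quoted directly; but this comparison is immediate.
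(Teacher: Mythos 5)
Your proposal is correct and follows essentially the same route as the paper: both close the same cycle $s\lambda T_0 \Rightarrow s\lambda T_1 \Rightarrow s\lambda T_\frac{1}{2} \Rightarrow s\lambda T_\frac{3}{8} \Rightarrow s\lambda T_\frac{1}{4} \Rightarrow s\lambda T_0$, invoking Theorem \ref{38} for the only link that needs $s\lambda$-symmetry and Theorems \ref{27}, \ref{23}, \ref{31}, \ref{33} (directly or via Remark \ref{34} and the observation after Definition \ref{32}, which rest on those same theorems) for the rest. The only difference is bookkeeping in which intermediate citation is quoted, not in the argument itself.
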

\begin{proof}
(1): $ s\lambda T_0 $ GTspace with $ s\lambda $-symmetryness is $s\lambda T_1 $ by theorem \ref{38},

(2):  $ s\lambda T_1 $ GTspace is $s\lambda T_\frac{1}{2} $ by theorem \ref{27}, \ref{23} (2), 

(3):  $s\lambda T_\frac{1}{2} $ GTspace is $ s\lambda T_\frac{3}{8} $ by theorem \ref{31}, \ref{33} (3),

(4):  $s\lambda T_\frac{3}{8} $ GTspace is $ s\lambda T_\frac{1}{4} $ by theorem  \ref{33} (3), \ref{33} (2),

(5):  $ s\lambda T_\frac{1}{4} $ GTspace is $ s\lambda T_0 $ by theorem \ref{33} (2), \ref{33} (1).
\end{proof}

\begin{definition}\label{40} (c.f.\cite{AD}). Let $ (X,\mu_1) $ and $ (Y,\mu_2) $ be two GTspaces. Then a function $ f: (X,\mu_1)  \longrightarrow (Y,\mu_2) $ is said to be $ s\lambda $-continuous (respectively $ s\beta_{\lambda} $-continuous and $ sg_{\lambda} $-continuous) if the inverse image of each $ \mu_2 $-open set $ V $ is $ s\lambda $-open (respectively $ s\beta_{\lambda}$-open and $ sg_{\lambda} $-open) in $(X,\mu_1)  $.

Clearly, a function $ f: (X,\mu_1)  \longrightarrow (Y,\mu_2) $ is $ s\lambda $-continuous (respectively $ s\beta_{\lambda} $-continuous and $ sg_{\lambda} $-continuous) if the inverse image of each $ \mu_2 $-closed set $ P $ is $ s\lambda $-closed (respectively $ s\beta_{\lambda}$-closed and $ sg_{\lambda} $-closed) in $(X,\mu_1)  $.
\end{definition}

\begin{theorem}\label{41} Suppose $ (X,\mu_1) $ and $ (Y,\mu_2) $ are two GTspaces.
A function $ f: (X,\mu_1)  \longrightarrow (Y,\mu_2) $ is $ s\lambda $-continuous if and only if $ f $ is both $ s\beta_{\lambda} $-continuous and $ sg_{\lambda} $-continuous.
\end{theorem}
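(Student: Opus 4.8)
The plan is to reduce the equivalence entirely to Theorem \ref{30}, which characterizes the $ s\lambda $-closed sets as precisely those sets that are simultaneously $ sg_\lambda $-closed and $ s\beta_\lambda $-closed. The three notions of continuity in Definition \ref{40} all admit the reformulation stated there in terms of closed sets: $ f $ is $ s\lambda $-continuous (respectively $ s\beta_\lambda $-continuous, $ sg_\lambda $-continuous) if and only if $ f^{-1}(P) $ is $ s\lambda $-closed (respectively $ s\beta_\lambda $-closed, $ sg_\lambda $-closed) in $ (X,\mu_1) $ for every $ \mu_2 $-closed set $ P $ of $ Y $. I would work throughout with this closed-set formulation, so that the problem collapses to a pointwise statement about the single subset $ f^{-1}(P) $ of $ X $.

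For the forward implication, suppose $ f $ is $ s\lambda $-continuous and let $ P $ be an arbitrary $ \mu_2 $-closed subset of $ Y $. Then $ f^{-1}(P) $ is $ s\lambda $-closed, and by the (easy) converse half of Theorem \ref{30} it is both $ sg_\lambda $-closed and $ s\beta_\lambda $-closed. Since $ P $ was arbitrary, this says exactly that $ f $ is simultaneously $ sg_\lambda $-continuous and $ s\beta_\lambda $-continuous.

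For the reverse implication, suppose $ f $ is both $ sg_\lambda $-continuous and $ s\beta_\lambda $-continuous, and again let $ P $ be any $ \mu_2 $-closed subset of $ Y $. Then $ f^{-1}(P) $ is at once $ sg_\lambda $-closed and $ s\beta_\lambda $-closed, so the main half of Theorem \ref{30} forces $ f^{-1}(P) $ to be $ s\lambda $-closed. As $ P $ ranges over all $ \mu_2 $-closed sets of $ Y $, this shows $ f $ is $ s\lambda $-continuous, completing the equivalence.

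I do not anticipate any genuine obstacle: the entire content is already packaged in Theorem \ref{30}, and the only care needed is to phrase continuity consistently via inverse images of closed sets rather than open sets, so that Theorem \ref{30} applies directly to $ f^{-1}(P) $. One could equally run the argument on the open-set side, using that complementation interchanges the "open" and "closed" versions of each of the three continuity notions, but the closed-set route is the most economical.
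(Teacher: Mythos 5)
Your proposal is correct and follows essentially the same route as the paper: both directions reduce to the closed-set formulation of continuity in Definition \ref{40} and then invoke the characterization of $s\lambda$-closed sets as exactly the sets that are both $sg_\lambda$-closed and $s\beta_\lambda$-closed (Theorem \ref{30}, with the easy direction also recorded in Remark \ref{29A}, which is what the paper cites for the forward implication). No gaps.
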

\begin{proof}
Suppose a function $ f: (X,\mu_1)  \longrightarrow (Y,\mu_2) $ is $ s\lambda $-continuous and $ V $ is a $ \mu_2 $-closed set. Then $ f^{-1}(V) $ is $ s\lambda $-closed in $(X,\mu_1)  $. Hence by remark \ref{29A}, $ f^{-1}(V) $ is both $ s\beta_{\lambda} $-closed and $ sg_{\lambda} $-closed in $(X,\mu_1)  $. Hence $ f $ is both $ s\beta_{\lambda} $-continuous and $ sg_{\lambda} $-continuous.

Conversely, suppose $ f $ is both $ s\beta_{\lambda} $-continuous and $ sg_{\lambda} $-continuous and $ V $ is a $ \mu_2 $-closed set. Then $ f^{-1}(V) $ is both $ s\beta_{\lambda} $-closed and $ sg_{\lambda} $-closed in $(X,\mu_1)  $. So by theorem \ref{30}$,  f^{-1}(V) $ is $ s\lambda $-closed in $(X,\mu_1)  $. Hence the result follows.
\end{proof}

\begin{definition}\label{42} Let $ (X,\mu_1) $ and $ (Y,\mu_2) $ be two GTspaces.
A bijective mapping  $ f: (X,\mu_1)  \longrightarrow (Y,\mu_2) $ is called $ s\lambda $-homeomorphism if the following conditions hold.

(i) Inverse image of every $ s\lambda $-open set in $(X,\mu_2) $ under $ f $ is  $ s\lambda $-open in $(X,\mu_1)  $ and 

(ii) Inverse image of every $ s\lambda $-open set in $(X,\mu_1) $ under $ f^{-1} $ is $ s\lambda $-open in $(X,\mu_2) $.

Note that if $ \mu_1=\mu_2=\mu $ then the conditions (i) and (ii) may be expressed as a single condition (B):  ``Inverse image of every $ s\lambda $-open set in $ (X,\mu) $ is $ s\lambda $-open in $ (X,\mu) $ under $ f $ and $ f^{-1} $."

For a set $ E $ of a GTspace $ (X,\mu) $ we denote the set of all maps $ f: (X,\mu) \longrightarrow (X,\mu) $ such that $ f $ is a $ s\lambda $-homeomorphism and $ f(E)=E $ by  $ s\lambda h(X,E, \mu) $. When $ E=\emptyset $ we denote $ s\lambda h(X,\emptyset, \mu) $ simply by $ s\lambda h(X, \mu) $. 

The composition $ g\circ f $ for every $ f, g \in s\lambda h(X,\mu) $ is clearly a binary operation on $ s\lambda h(X,\mu) $.
\end{definition}

\begin{theorem}\label{44} (c.f.\cite{AD}).
Suppose $(X,\mu) $ is a GTspace. Then   $  s\lambda h(X,\mu) $ is a group and $ s\lambda h(X,E,\mu) $ is a subgroup of $ s\lambda h(X,\mu) $ for each $ E\subset X $ with respect to the composition of maps of $ s\lambda h(X,\mu) $ defined above.
\end{theorem}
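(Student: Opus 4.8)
The plan is to verify the group axioms for $s\lambda h(X,\mu)$ directly from the characterization of a $s\lambda$-homeomorphism given in Definition \ref{42}, namely condition (B): a bijection $f:(X,\mu)\longrightarrow(X,\mu)$ is a $s\lambda$-homeomorphism precisely when the inverse image of every $s\lambda$-open set is again $s\lambda$-open under both $f$ and $f^{-1}$. Since $f$ is bijective, the inverse image under $f^{-1}$ of a set $W$ equals the forward image $f(W)$, so condition (B) is symmetric: $f$ is a $s\lambda$-homeomorphism if and only if $f^{-1}$ is. I would use this symmetry repeatedly. As a first step I would check that the identity map $\mathrm{id}_X$ lies in $s\lambda h(X,\mu)$, since it is a bijection and the inverse image of any set under $\mathrm{id}_X$ (or its inverse) is that same set; thus the set is nonempty and has a candidate identity element. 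Associativity of the binary operation is automatic, as composition of maps is always associative.

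Next, for closure I would take $f,g\in s\lambda h(X,\mu)$ and show $g\circ f\in s\lambda h(X,\mu)$. The composite of two bijections is a bijection, and for any $s\lambda$-open set $V$ one has $(g\circ f)^{-1}(V)=f^{-1}(g^{-1}(V))$; since $g$ is a $s\lambda$-homeomorphism $g^{-1}(V)$ is $s\lambda$-open, and then since $f$ is a $s\lambda$-homeomorphism $f^{-1}(g^{-1}(V))$ is $s\lambda$-open. Applying the same argument to $(g\circ f)^{-1}=f^{-1}\circ g^{-1}$, using the $f^{-1}$ and $g^{-1}$ halves of condition (B), verifies the remaining half of condition (B) for $g\circ f$. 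For inverses, given $f\in s\lambda h(X,\mu)$ the map $f^{-1}$ is a bijection and, by the symmetry of condition (B) noted above, is again a $s\lambda$-homeomorphism; moreover $f\circ f^{-1}=f^{-1}\circ f=\mathrm{id}_X$, so $f^{-1}$ serves as the group inverse. This completes the argument that $s\lambda h(X,\mu)$ is a group.

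Finally, for the subgroup claim I would apply the standard subgroup criterion to $s\lambda h(X,E,\mu)=\{f\in s\lambda h(X,\mu):f(E)=E\}$. It is nonempty because $\mathrm{id}_X(E)=E$. If $f,g\in s\lambda h(X,E,\mu)$ then $(g\circ f)(E)=g(f(E))=g(E)=E$, so the set is closed under composition; and if $f(E)=E$ then applying the bijection $f^{-1}$ to both sides yields $f^{-1}(E)=E$, so it is closed under inverses. Since these composites and inverses already lie in the group $s\lambda h(X,\mu)$ by the previous paragraph, it follows that $s\lambda h(X,E,\mu)$ is a subgroup of $s\lambda h(X,\mu)$ for every $E\subset X$.

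I expect no serious obstacle in this proof; the only point demanding genuine care is the bookkeeping for condition (B), specifically the observation that for a bijection the inverse image under $f^{-1}$ coincides with the forward image under $f$. This is what makes condition (B) symmetric in $f$ and $f^{-1}$ and thereby guarantees that the inverse of an $s\lambda$-homeomorphism remains an $s\lambda$-homeomorphism, which is the one axiom that is not purely formal.
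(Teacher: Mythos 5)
Your proof is correct and follows essentially the same route as the paper: direct verification of the group axioms for $s\lambda h(X,\mu)$ via condition (B), followed by the subgroup criterion for $s\lambda h(X,E,\mu)$. The only differences are ones of detail---you spell out the closure argument and the symmetry of condition (B) under $f\mapsto f^{-1}$, which the paper dismisses as ``clear,'' and you use the two-step subgroup test where the paper uses the one-step $g\circ f^{-1}$ test.
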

\begin{proof}
Clearly the operation is associative and closed. 
Since the identity map satisfies the condition(B), it belongs to  $s\lambda h(X,\mu) $ and it is the identity element of the set. Again if any $ f\in s\lambda h(X,\mu) $, then $ f^{-1} $ satisfies the condition (B) and hence $ f^{-1}\in s\lambda h(X,\mu) $. So $ s\lambda h(X,\mu) $ is a group with this binary operation. 

Next, since $ s\lambda h(X,E,\mu)  \subset s\lambda h(X,\mu)$ then we have to prove only that for any $ f,g \in s\lambda h(X,E,\mu), g\circ f^{-1}\in s\lambda h(X,E,\mu) $. Since  $ f,g \in s\lambda h(X,E,\mu),$ then $ f,f^{-1}, g, g^{-1}  $ satisfy the condition (B) which map $ E $ into $ E $. Hence $ g\circ f^{-1} $ and $ f\circ g^{-1} $ satisfy the condition (B). Again, $ g\circ f^{-1}(E)=g(E)=E $. Therefore $ s\lambda h(X,E,\mu) $ is a subgroup of $ s\lambda h(X,\mu) $ for each $ E\subset X $.
\end{proof}

\end{document}